\newcommand\footnoteref[1]{\protected@xdef\@thefnmark{\ref{#1}}\@footnotemark}
\newtheorem{theorem}{Theorem}
\newtheorem{lemma}[theorem]{Lemma}%
\newtheorem{corollary}[theorem]{Corollary}%
\newtheorem{definition}[theorem]{Definition}%
\let\originalleft\left
\let\originalright\right
\renewcommand{\left}{\mathopen{}\mathclose\bgroup\originalleft}
\renewcommand{\right}{\aftergroup\egroup\originalright}
\DeclareMathAlphabet{\pazocal}{OMS}{zplm}{m}{n}
\newtheorem*{lemma*}{Lemma}
\newtheorem*{proposition*}{Proposition}
\newtheorem*{theorem*}{Theorem}
\renewcommand*\env@matrix[1][\arraystretch]{%
  \edef\arraystretch{#1}%
  \hskip -\arraycolsep
  \let\@ifnextchar\new@ifnextchar
  \array{*\c@MaxMatrixCols c}}
\newcommand{\TODO}[1][0]{%
  \ifx#10
    $\square$
  \else
    $\boxtimes$
  \fi
}
\newcommand\blfootnote[1]{%
  \begingroup
  \renewcommand\thefootnote{}\footnote{#1}%
  \addtocounter{footnote}{-1}%
  \endgroup
}
\tikzstyle{inarrow} = [<-,very thick]
\tikzstyle{bcircle} = [circle,draw = blue]
\title{Entropy of Compact Operators with Applications to Landau-Pollak-Slepian Theory and Sobolev Spaces}
\date{}
\author{Thomas Allard \\ tallard@ethz.ch  \and Helmut Bölcskei \\ hboelcskei@ethz.ch}
\begin{document}

\maketitle


\abstract{
\noindent
We derive a precise general relation between the entropy of a compact operator and its eigenvalues.
It is then shown how this result along with the underlying philosophy can be applied to improve substantially on the
best known characterizations of the entropy of the Landau-Pollak-Slepian operator and the metric entropy of unit balls in Sobolev spaces.
}



\section{Introduction}
\label{sec:sample1}

\blfootnote{The authors gratefully acknowledge support by the Lagrange Mathematics and Computing Research Center, Paris, France.}

\noindent 
Characterizing the metric entropy of function classes is a topic of longstanding interest in the mathematics and engineering literature, spanning across domains as diverse as approximation theory \cite{lorentzApproximationFunctions1966,lorentzMetricEntropyApproximation1966}, information theory \cite{yangInformationtheoreticDeterminationMinimax1999,donohoDataCompressionHarmonic1998},  statistics \cite{dudleyUniversalDonskerClasses1987,wainwrightHighDimensionalStatistics2019}, the study of dynamical systems \cite{zamesMetricComplexityCausal1977,zamesNoteMetricDimension1993}, and deep neural network theory \cite{elbrachterDeepNeuralNetwork2021,grohsPhaseTransitionsRate2021}.
Perhaps somewhat less widely known are the related concepts of entropy and entropy numbers of linear compact operators between Banach spaces \cite{carlEntropyCompactnessApproximation1990,carlInequalitiesEigenvaluesEntropy1980,edmundsFunctionSpacesEntropy1996,konigEigenvalueDistributionCompact1986}, finding application in domains as varied as control theory \cite{prosserEEntropyECapacityCertain1966}, machine learning \cite{williamson2001generalization}, and the study of Brownian motion \cite{booklifshits}.

Based on recent advances in the characterization of the metric entropy of ellipsoids by the authors of the present paper \cite{secondpaper,firstpaper}, we derive a precise relationship between the entropy of a compact operator and the asymptotic behavior of its eigenvalues, thereby improving significantly upon the classical result \cite[Proposition~1.3.2]{carlEntropyCompactnessApproximation1990}. 
As a byproduct, we also obtain a
relation between the entropy of a compact operator and its eigenvalue-counting function.



Finally, it is demonstrated how our results along with the underlying general philosophy can be applied to improve substantially on the best known characterizations of the entropy of the Landau-Pollak-Slepian operator and the metric entropy of unit balls in Sobolev spaces.

\subsection{Notation and terminology}

\noindent
We write $\mathbb{N}$ for the set of non-negative integers, $\mathbb{N}^*$ for the positive integers, $\mathbb{R}$ for the real numbers, and $\mathbb{R}^*_+$ for the positive real numbers.
For $d\in\mathbb{N}^*$, we denote by $\omega_d$ the volume of the unit ball in $\mathbb{R}^d$ and by $\pazocal{H}^d$ the $d$-dimensional Hausdorff measure.

When comparing the asymptotic behavior of the functions $f$ and $g$ as $x \to \ell$, with $\ell \in \mathbb{R}\cup\{-\infty, \infty\}$, we use the standard notation $f = o_{x \to \ell}(g)$ to express that $\lim_{x \to \ell} \frac{f(x)}{g(x)} =0$.
We further indicate the asymptotic equivalence $\lim_{x \to \ell} \frac{f(x)}{g(x)} =1$ according to
$f \stackrel{x\to\ell}{\sim} g$. 

 The space of square summable sequences is referred to as $\ell^2$. 
 Given a set $\Omega \subseteq \mathbb{R}^d$, we let $L^2(\Omega)$ be the space of square-integrable functions on $\Omega$ equipped with the usual inner product $\langle \cdot  ,  \cdot \rangle_{L^2(\Omega)}$.
We write $\text{supp}(f)$ for the essential support of  $f \in L^2(\Omega)$
and define $C_0^\infty(\Omega)$ to be the space of infinitely differentiable functions with compact support contained in $\Omega$.
Further, $\text{Id}$ designates the identity operator and $\pazocal{F}$ the Fourier transform operator. 
Given a Banach space $(E, \|\cdot\|_E)$, we shall write 
$\pazocal{B}_E \coloneqq \left\{x \in E \mid \|x\|_E\leq 1\right\}$
for its closed unit ball.

Finally, $\log(\cdot)$ stands for the logarithm to base $2$, $\ln(\cdot)$ is the natural logarithm, and $\mathbbm{1}_{X}(\cdot)$ designates the indicator function corresponding to the set $X$.

\section{Entropy of compact operators}\label{sec:entrcompop}

\noindent
We first introduce the notions of metric entropy and entropy numbers of a set.

\begin{definition}[Metric entropy and entropy numbers of sets]\label{def:entropyandnbset}
Let $(\pazocal{X}, d)$ be a metric space and $\pazocal{K}\subseteq \pazocal{X}$ a compact set. 
An $\varepsilon$-covering of $\pazocal{K}$ with respect to the metric $d$ is a set $\{x_1,...\, ,x_N\} \subseteq \pazocal{X}$  such that for each $x \in \pazocal{K}$, there exists an $i\in \{1, \dots, N\}$ so that $d(x,x_i)\leq \varepsilon$. The $\varepsilon$-covering number $N(\varepsilon ; \pazocal{K}, d)$ is the cardinality of a smallest such $\varepsilon$-covering.
The \emph{metric entropy} of the set $\pazocal{K}$ is given by
\begin{equation*}
H \left(\varepsilon ; \pazocal{K}, d\right)
\coloneqq \log N\left(\varepsilon ; \pazocal{K}, d\right).
\end{equation*}
For $m\in \mathbb{N}^*$, the \emph{$m$-th entropy number} $\varepsilon_m$ of $\pazocal{K}$ is defined as the smallest radius $\varepsilon>0$ required to cover $\pazocal{K}$ with at most $2^m$ balls of radius $\varepsilon$, i.e.,
\begin{equation*}
\varepsilon_m \left(\pazocal{K}, d\right) 
\coloneqq \inf \left\{\varepsilon >0 \mid H \left(\varepsilon ; \pazocal{K}, d\right) \leq m\right\}.
\end{equation*}
\end{definition}

\noindent
The extension of the concepts in Definition \ref{def:entropyandnbset} to compact linear operators $T\colon E \to F$ between Banach spaces $E$, $F$ 
is as follows. 
We first note that the image $T(\pazocal{B}_E)$ of the unit ball in $E$ has compact closure in $F$. 
The entropy of the compact operator $T$, quantifying its compactness, is then simply given by the metric entropy of $\overline{T(\pazocal{B}_E)}$.

\begin{definition}[Entropy and entropy numbers of compact linear operators]
Let $(E, \|\cdot\|_E)$ and $(F, \|\cdot\|_F)$ be Banach spaces, and let $T\colon E \to F$ be a compact linear operator.
We define the entropy of $T$ as the metric entropy of the closure of the image of the unit ball $\pazocal{B}_E$, according to
\begin{equation*}
H \left(\varepsilon ; T \right)
\coloneqq H \left(\varepsilon ; \overline{T(\pazocal{B}_E)}, \|\cdot\|_F \right).
\end{equation*}
Likewise, for $m\in \mathbb{N}^*$, the $m$-th entropy number of 
$\, T$ is defined as the $m$-th entropy number of the closure of the image of the unit ball $\pazocal{B}_E$, i.e.,
\begin{equation*}
\varepsilon_m(T) \coloneqq
\inf \left\{\varepsilon >0 \mid H \left(\varepsilon ; \overline{T(\pazocal{B}_E)}, \|\cdot\|_F \right) \leq m\right\}.
\end{equation*}
\end{definition}

\noindent
Following the convention in the literature, we shall talk about the ``metric entropy'' of a set but will simply say the ``entropy'' of an operator.
Although all the results in this paper can be stated in the broader setting of operators between general Banach spaces, for concreteness and simplicity of exposition, we restrict our attention to endomorphisms of separable real Hilbert spaces. 
The restriction we impose further has the advantage of allowing direct comparisons with classical results, as, e.g., those in \cite{carlEntropyCompactnessApproximation1990, prosserEEntropyECapacityCertain1966}.
We shall henceforth consider a separable real Hilbert space $\mathcal{H}$ along with the compact linear operator $T\colon \mathcal{H}\to \mathcal{H}$.
Arguing through polar decomposition as in \cite{prosserEEntropyECapacityCertain1966} or \cite[Chapter~3.4]{carlEntropyCompactnessApproximation1990}, we can further restrict our attention to positive self-adjoint compact operators. Such operators can be diagonalized (in suitable bases); the corresponding eigenvalues are positive and will be denoted as $\{\lambda_n\}_{n\in \mathbb{N}^*}$.
For expositional convenience, eigenvalues will be assumed ordered in a non-increasing manner throughout the paper.

Relating the entropy numbers of compact linear operators to their eigenvalues has been a topic of longstanding interest.
The corresponding results in the area are typically lower and upper bounds on the entropy numbers in terms of the geometric mean of the eigenvalues, see \cite{carlInequalitiesEigenvaluesEntropy1980} for Banach spaces, \cite[Theorem~1.3.4]{edmundsFunctionSpacesEntropy1996} for quasi-Banach spaces, and \cite{prosserEEntropyECapacityCertain1966} for Hilbert spaces.
To the best of our knowledge, the sharpest known result \cite[Proposition~1.3.2]{carlEntropyCompactnessApproximation1990} is
\begin{equation}\label{eq:bestlitbound}
\sup_{N \in \mathbb{N}^*} \left\{2^{-m/N} \left[\prod_{n=1}^N \lambda_n \right]^{1/N}\right\}
\leq \varepsilon_m(T)
\leq 6 \sup_{N \in \mathbb{N}^*} \left\{2^{-m/N} \left[\prod_{n=1}^N \lambda_n \right]^{1/N}\right\},
\end{equation}
for all $m\in \mathbb{N}^*$.

Our approach is based on the observation that the image of the unit ball in $\mathcal{H}$ under $T$ is an ellipsoid with semi-axes given by the eigenvalues $\{\lambda_n\}_{n\in \mathbb{N}^*}$ of $T$.
The strategy of using the metric entropy of ellipsoids to characterize the entropy numbers of compact linear operators has been employed previously 
in the literature, see, e.g., \cite{prosserEEntropyECapacityCertain1966}. 
However, recent progress on the characterization of the metric entropy of ellipsoids \cite{secondpaper,firstpaper}, by the authors of the present paper, allows for significant improvements. In particular, we obtain the following general result.

\begin{theorem}\label{cor: Metric entropy for polynomial ellipsoids second order}
Let $\mathcal{H}$ be a separable real Hilbert space and let $T\colon \mathcal{H}\to \mathcal{H}$ be a positive self-adjoint compact operator with eigenvalues $\{\lambda_n\}_{n\in\mathbb{N}^*}$ satisfying
\begin{equation}\label{eq:asympeigen}
\lambda_n 
= \frac{c_1}{n^{\alpha_1}} + \frac{c_2}{n^{\alpha_2}} + o_{n \to \infty} \left(\frac{1}{n^{\alpha_2}}\right),
\end{equation}
where $c_1\in\mathbb{R}_+^*$, $c_2\in\mathbb{R}$, and $\alpha_1, \alpha_2 \in\mathbb{R}_+^* $ are such that
\begin{equation*}
\text{either } \quad 
\alpha_1<\alpha_2 < \alpha_1+1/2,
\ \text{ or } \quad
\begin{cases}
\alpha_1 = \alpha_2, \text{ and } \\
c_2 = 0.
\end{cases} 
\end{equation*}
Then, the entropy of $T$ satisfies
\begin{equation}\label{eq:main1}
H \left(\varepsilon;T\right) 
=  \frac{\alpha_1{c_1}^{\frac{1}{\alpha_1}}}{\ln(2)} \, {\varepsilon}^{-\frac{1}{\alpha_1}}
+  \frac{c_2\, {c_1}^{\frac{1-\alpha_2}{\alpha_1}}}{\ln(2)(\alpha_1-\alpha_2+1)} \, \varepsilon^{-\frac{\alpha_1-\alpha_2+1}{\alpha_1}} 
+  o_{\varepsilon \to 0}\left(\varepsilon^{-\frac{\alpha_1-\alpha_2+1}{\alpha_1}}\right),
\end{equation}
which can equivalently be expressed in terms of entropy numbers according to 
\begin{equation}\label{eq:main2}
\varepsilon_m (T) 
=  c_1  \left(\frac{\alpha_1}{\ln(2)}\right)^{\alpha_1}m^{-\alpha_1} 
+ \frac{c_2 }{\alpha_1-\alpha_2+1} \left(\frac{\alpha_1}{\ln(2)}\right)^{\alpha_2} {m^{-\alpha_2}} 
+ o_{m \to \infty} \left({m^{-\alpha_2}}\right).
\end{equation}
\end{theorem}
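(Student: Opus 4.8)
The plan is to recognize $\overline{T(\pazocal{B}_{\mathcal{H}})}$ as an ellipsoid and to then invoke the sharp characterization of the metric entropy of ellipsoids obtained in \cite{secondpaper, firstpaper}. By the reduction to self-adjoint operators recalled above, we may diagonalize $T$ in an orthonormal basis $\{e_n\}_{n\in\mathbb{N}^*}$ of $\mathcal{H}$. Since the eigenvalues are ordered non-increasingly and \eqref{eq:asympeigen} forces $\lambda_n \to 0$ with $\lambda_n>0$ for all large $n$, in fact $\lambda_n>0$ for every $n\in\mathbb{N}^*$ (so $T$ is not of finite rank). A direct computation then shows that $T$ maps $\pazocal{B}_{\mathcal{H}}$ onto the solid ellipsoid
\begin{equation*}
\mathcal{E} \coloneqq \left\{ y \in \mathcal{H} \mid \sum_{n\in\mathbb{N}^*} \frac{\langle y, e_n\rangle_{\mathcal{H}}^2}{\lambda_n^2} \leq 1 \right\},
\end{equation*}
which is closed, so that $\overline{T(\pazocal{B}_{\mathcal{H}})} = \mathcal{E}$ and $H(\varepsilon;T) = H(\varepsilon; \mathcal{E}, \|\cdot\|_{\mathcal{H}})$; identifying $\mathcal{H}$ with $\ell^2$ through $\{e_n\}_{n\in\mathbb{N}^*}$, $\mathcal{E}$ is the standard ellipsoid with semi-axes $\{\lambda_n\}_{n\in\mathbb{N}^*}$.

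The core of the argument is then to apply the asymptotic formula for the metric entropy of an ellipsoid whose semi-axes obey two-term polynomial asymptotics, established in \cite{secondpaper, firstpaper}. The conditions imposed on $\alpha_1,\alpha_2,c_1,c_2$ in the statement are precisely the hypotheses under which that result produces a genuine second-order expansion; in particular, $\alpha_2 < \alpha_1\!\left(1+\frac{2}{6\alpha_1+1}\right)$ guarantees that the $\varepsilon^{(\alpha_2-\alpha_1-1)/\alpha_1}$ term dominates the error terms intrinsic to the ellipsoid analysis. Note that \eqref{eq:asympeigen} constrains only the tail of $\{\lambda_n\}_{n\in\mathbb{N}^*}$, and that modifying finitely many semi-axes changes $H(\varepsilon;\mathcal{E})$ by at most an additive constant, which is absorbed into $o_{\varepsilon\to 0}\!\left(\varepsilon^{(\alpha_2-\alpha_1-1)/\alpha_1}\right)$ since $\alpha_2<\alpha_1+1$ makes that exponent negative; hence no regularity of the initial segment of the sequence is required. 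Feeding \eqref{eq:asympeigen} into the ellipsoid entropy formula, and accounting for the conversion between the natural logarithm used there and the base-$2$ logarithm in the present definition of metric entropy — the source of the $\ln(2)$ factors — yields \eqref{eq:main1}.

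It remains to deduce \eqref{eq:main2} from \eqref{eq:main1}. Since $\varepsilon\mapsto H(\varepsilon;T)$ is non-increasing and diverges as $\varepsilon\to 0$, the entropy number $\varepsilon_m(T)$ is, up to the staircase discrepancy of this monotone map, the solution $\varepsilon$ of $H(\varepsilon;T)=m$. I would therefore treat \eqref{eq:main1} as an implicit relation, invert its leading term $m\sim_{\varepsilon\to 0}\frac{\alpha_1 c_1^{1/\alpha_1}}{\ln(2)}\,\varepsilon^{-1/\alpha_1}$ to get $\varepsilon\sim_{m\to\infty} c_1(\alpha_1/\ln(2))^{\alpha_1}m^{-\alpha_1}$, and then substitute this first-order solution back into \eqref{eq:main1} to extract the $m^{-\alpha_2}$ correction, checking that the residual error becomes $o_{m\to\infty}(m^{-\alpha_2})$. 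The genuine difficulty is external to this argument, residing in the sharp metric entropy estimates for ellipsoids of \cite{secondpaper, firstpaper} on which everything rests; within the present proof, the only delicate point is this asymptotic inversion, where one must ensure that neither the discretization built into the definition of $\varepsilon_m$ nor the residual error term in \eqref{eq:main1} pollutes the second-order coefficient, all remaining steps being routine.
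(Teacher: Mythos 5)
Your proposal follows essentially the same route as the paper: diagonalize $T$ via the spectral theorem, identify $\overline{T(\pazocal{B}_{\mathcal{H}})}$ with the $\ell^2$-ellipsoid of semi-axes $\{\lambda_n\}_{n\in\mathbb{N}^*}$, invoke the sharp two-term ellipsoid entropy asymptotics of \cite{secondpaper,firstpaper} (the paper packages this as Lemma~\ref{lem:corpolydecay}), and then obtain \eqref{eq:main2} by exactly the leading-order-inversion-plus-back-substitution argument that the paper isolates as Lemma~\ref{lem:inv}. Your side remark that modifying finitely many semi-axes only shifts $H$ by an additive constant is not actually needed (and would itself require a quantitative argument at second-order precision), since the cited ellipsoid results already assume only tail asymptotics of the semi-axes; otherwise the proof is correct.
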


\begin{proof}
As $\mathcal{H}$ is a separable real Hilbert space and $T$ a (positive) self-adjoint compact operator, there exists an orthonormal basis $\{\psi_n\}_{n\in\mathbb{N}^*}$ of $\mathcal{H}$ composed of eigenvectors of $T$  (see, e.g., \cite[Theorem~6.11]{brezisFunctionalAnalysisSobolev2011}).
Using the Bessel-Parseval identity, one obtains the following characterization of the image, under $T$, of the unit ball $\pazocal{B}$ in $\mathcal{H}$:
\begin{align*}
T(\pazocal{B})
&= \left\{y\in \mathcal{H}   \mid y=Tx \text{ with } \|x\|^2_\mathcal{H} \leq 1 \right\}\\
&= \left\{y\in \mathcal{H}   \mid y= \sum_{n=1}^\infty \lambda_n x_n \psi_n \text{ with } \{x_n\}_{n\in\mathbb{N}^*} \in \ell^2 \text{ s.t. } \sum_{n=1}^\infty |x_n|^2 \leq 1 \right\}\\
&= \left\{y\in \mathcal{H}   \mid y= \sum_{n=1}^\infty y_n \psi_n \text{ with } \{y_n\}_{n\in\mathbb{N}^*} \in \ell^2 \text{ s.t. }  \sum_{n=1}^\infty |y_n/\lambda_n|^2 \leq 1 \right\}.
\end{align*}
This shows that $T(\pazocal{B})$ is isometric to the $\ell^2$-ellipsoid with semi-axes $\{\lambda_n\}_{n\in\mathbb{N}^*}$.
The result (\ref{eq:main1}) then follows as a direct consequence of Lemma~\ref{lem:corpolydecay} in the Appendix.

Now, turning to (\ref{eq:main2}), we first note that, by the definition of $\varepsilon_m(T)$, 
\begin{equation}\label{eq:oqsnknvdbsvfdnsfres}
H \left(\varepsilon_m(T);T\right) 
\leq m 
< H \left(\varepsilon_m(T)-\eta;T\right),
\quad \text{for all } m\in\mathbb{N}^* \text{ and } \eta>0.
\end{equation}
Using (\ref{eq:main1}) and choosing $\eta$ small enough, we deduce from (\ref{eq:oqsnknvdbsvfdnsfres}) that
\begin{equation*}
m
=  \frac{\alpha_1{c_1}^{\frac{1}{\alpha_1}} }{\ln(2)} \, {\varepsilon_m}^{-\frac{1}{\alpha_1}} 
+  \frac{c_2\, {c_1}^{\frac{1-\alpha_2}{\alpha_1}}}{\ln(2)(\alpha_1-\alpha_2+1)} \, \varepsilon_m^{-\frac{\alpha_1-\alpha_2+1}{\alpha_1}} 
+  o_{m \to \infty}\left(\varepsilon_m^{-\frac{\alpha_1-\alpha_2+1}{\alpha_1}}\right).
\end{equation*}%
Inverting this expression, by application of Lemma~\ref{lem:inv1} in the case $\alpha_1=\alpha_2, c_2=0$ and Lemma~\ref{lem:inv} otherwise, yields (\ref{eq:main2}).
\end{proof}

\noindent
In view of the applications considered in Section \ref{sec:appl}, we decided to restrict the statement of Theorem~\ref{cor: Metric entropy for polynomial ellipsoids second order} to regularly varying (in the sense of \cite[Definition 1.2.1]{binghamRegularVariation1987}) eigenvalue sequences 
$\{\lambda_n\}_{n\in\mathbb{N}^*}$. 
An extension to exponentially decaying eigenvalue sequences can be obtained by
replacing the argument in the proof of Theorem~\ref{cor: Metric entropy for polynomial ellipsoids second order} relying on Lemma~\ref{lem:corpolydecay} by \cite[Theorem~9]{firstpaper}.

As announced, we now show how Theorem~\ref{cor: Metric entropy for polynomial ellipsoids second order} leads to a significant improvement of (\ref{eq:bestlitbound}). 
To this end, we consider the case $\lambda_n \stackrel{n\to\infty}{\sim }c_1 n^{-\alpha_1}$
and note that application of Stirling's formula allows to characterize the integer $N$ attaining the supremum in (\ref{eq:bestlitbound}), resulting in
\begin{equation}\label{eq:nrjkzhgekjrgbhtdukiytfgf}
\sup_{N \in \mathbb{N}^*} \left\{2^{-m/N} \left[\prod_{n=1}^N \lambda_n \right]^{1/N} \right\}
= c_1  \left(\frac{\alpha_1}{\ln(2)}\right)^{\alpha_1}m^{-\alpha_1} + o_{m\to\infty}\left({m^{-\alpha_1}}\right).
\end{equation}
A detailed derivation of (\ref{eq:nrjkzhgekjrgbhtdukiytfgf}) is provided in Lemma~\ref{lem:jkdqvbftgzdezrfgezg} in the Appendix.
We can conclude from (\ref{eq:nrjkzhgekjrgbhtdukiytfgf}) that (\ref{eq:bestlitbound}) characterizes the first-order term only in the asymptotic expansion of $\varepsilon_m (T)$ and does so up to a multiplicative factor of $6$.
Because of this gap between the upper and the lower bound, (\ref{eq:bestlitbound}) cannot provide a characterization of $\varepsilon_m (T)$ beyond the first-order term. In contrast, our result (\ref{eq:main2}) specifies the first- and second-order terms with precise constants.

We note that Theorem~\ref{cor: Metric entropy for polynomial ellipsoids second order} does not presuppose a full characterization of the eigenvalues of $T$, but only requires information on their asymptotic behavior.
This is compatible with 
standard results in micro-local analysis (see, e.g., \cite[Chapter~30]{shubinPseudodifferentialOperatorsSpectral2001}) characterizing the asymptotic  behavior of the eigenvalue-counting function
\begin{equation*}
M_T \colon \gamma \in (0,\infty) \mapsto \#\left\{ n \in \mathbb{N}^* \mid \lambda_n \geq \gamma \right\} \in \mathbb{N}
\end{equation*}
of (pseudo-)differential operators $T$ in the asymptotic regime $\gamma \to 0$.
The canonical example is given by the Laplacian (relevant, e.g., in the study of Sobolev spaces), where the Weyl law leads to the asymptotic scaling of the eigenvalue-counting function (see Section~\ref{sec:applsob} for details).
The following result is a simple consequence of Theorem~\ref{cor: Metric entropy for polynomial ellipsoids second order} and illustrates how knowledge of the asymptotic behavior of $M_T(\gamma)$ yields an asymptotic characterization of the entropy and entropy numbers of $T$.

\begin{corollary}\label{cor: Metric entropy for polynomial ellipsoids second order ecf}
Let $\mathcal{H}$ be a separable real Hilbert space and let $T\colon \mathcal{H}\to \mathcal{H}$ be a positive self-adjoint compact operator with eigenvalue-counting function satisfying
\begin{equation}\label{eq:asympevf}
M_T(\gamma) 
= \kappa_1\gamma^{-\beta_1} + \kappa_2 \gamma^{-\beta_2} + o_{\gamma \to 0} \left(\gamma^{-\beta_2}\right),
\end{equation}
where $\kappa_1\in\mathbb{R}_+^*$, $\kappa_2\in\mathbb{R}$,  and $\beta_1, \beta_2 \in\mathbb{R}_+^*$ are such that 
\begin{equation}\label{eq:condonthebetas}
\text{either } \quad
\beta_1/2<\beta_2 < \beta_1, \ \text{ or } \quad 
\begin{cases}
\beta_1=\beta_2, \text{ and } \\
\kappa_2 = 0.
\end{cases}
\end{equation}
Then, the entropy of $T$ satisfies
\begin{equation*}
H \left(\varepsilon; T\right)
= \frac{\kappa_1}{\beta_1\ln(2)}
{\varepsilon}^{-\beta_1} 
+ \frac{\kappa_2}{\beta_2\ln(2)}
\varepsilon^{-\beta_2} 
+ o_{\varepsilon \to 0}\left(\varepsilon^{-\beta_2}\right),
\end{equation*}
which, upon letting $\beta^* \coloneqq \beta_1/(1+\beta_1-\beta_2)$, can equivalently be expressed as
\begin{equation*}
\varepsilon_m (T) 
=   \left(\frac{{\kappa_1}}{{\beta_1}\ln(2)}\right)^{1/\beta_1} \! m^{-1/\beta_1}
+ \frac{\kappa_2}{\kappa_1 \beta_2 }\left(\frac{{\kappa_1}}{{\beta_1}\ln(2)}\right)^{1/\beta^*}  m^{-1/\beta^*} 
+ o_{m \to \infty} \left(m^{-1/\beta^*} \right).
\end{equation*}
\end{corollary}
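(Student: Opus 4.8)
The plan is to deduce Corollary~\ref{cor: Metric entropy for polynomial ellipsoids second order ecf} from Theorem~\ref{cor: Metric entropy for polynomial ellipsoids second order} by converting the asymptotic expansion (\ref{eq:asympevf}) of the eigenvalue-counting function $M_T$ into an asymptotic expansion of the eigenvalues $\{\lambda_n\}$ themselves, and then reading off the two displayed expansions directly from the theorem. The starting point is the observation that, since the $\lambda_n$ are non-increasing, $M_T$ is the generalised inverse of $n\mapsto\lambda_n$: one has $M_T(\gamma)\ge n \iff \lambda_n\ge\gamma$, so in particular $M_T(\lambda_n)\ge n$, while $M_T(\lambda_n+\varepsilon)\le n-1$ for every $\varepsilon>0$.

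\textbf{Step 1 (counting function $\Rightarrow$ eigenvalues).} Writing $\phi(\gamma)=\kappa_1\gamma^{-\beta_1}+\kappa_2\gamma^{-\beta_2}$, so that (\ref{eq:asympevf}) reads $M_T(\gamma)=\phi(\gamma)+o_{\gamma\to0}(\gamma^{-\beta_2})$, I would evaluate this at $\gamma=\lambda_n$ and at $\gamma=\lambda_n+\varepsilon$, combine with the two inequalities above, use the continuity of $\phi$ and the monotonicity of $\gamma\mapsto\gamma^{-\beta_2}$, and let $\varepsilon\downarrow0$; this yields $|\phi(\lambda_n)-n|=o_{n\to\infty}(\lambda_n^{-\beta_2})$ (the additive $1$ coming from $n-1$ is absorbed, since $1=o(\lambda_n^{-\beta_2})$ as $\lambda_n\to0$), i.e.
\[
n=\kappa_1\lambda_n^{-\beta_1}+\kappa_2\lambda_n^{-\beta_2}+o_{n\to\infty}\!\left(\lambda_n^{-\beta_2}\right).
\]
This squeeze automatically handles repeated eigenvalues, for which $M_T(\lambda_n)\neq n$ in general. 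Inverting this relation with Lemma~\ref{lem:inv} in the Appendix --- the very step already carried out for entropy numbers in the proof of Theorem~\ref{cor: Metric entropy for polynomial ellipsoids second order} --- gives
\[
\lambda_n=c_1 n^{-\alpha_1}+c_2 n^{-\alpha_2}+o_{n\to\infty}\!\left(n^{-\alpha_2}\right),\qquad \alpha_1=\tfrac{1}{\beta_1},\ \ \alpha_2=\tfrac{1+\beta_1-\beta_2}{\beta_1}=\tfrac{1}{\beta^*},
\]
with $c_1=\kappa_1^{1/\beta_1}$ and $c_2=\tfrac{\kappa_2}{\beta_1}\kappa_1^{(1-\beta_2)/\beta_1}$.

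\textbf{Steps 2--3 (admissibility and conclusion).} A direct computation shows that, under the substitution of Step~1, the constraint (\ref{eq:condonthebetas}) on $(\beta_1,\beta_2,\kappa_2)$ is precisely the hypothesis of Theorem~\ref{cor: Metric entropy for polynomial ellipsoids second order} on $(\alpha_1,\alpha_2,c_2)$: $\beta_2<\beta_1\Leftrightarrow\alpha_1<\alpha_2$, the lower bound $\beta_1(1-\tfrac{2}{6+\beta_1})<\beta_2\Leftrightarrow\alpha_2<\alpha_1(1+\tfrac{2}{6\alpha_1+1})$, and $\beta_1=\beta_2,\ \kappa_2=0\Leftrightarrow\alpha_1=\alpha_2,\ c_2=0$. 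Feeding the parameters of Step~1 into Theorem~\ref{cor: Metric entropy for polynomial ellipsoids second order} then yields the two claimed expansions once the constants are simplified using $c_1^{1/\alpha_1}=\kappa_1$, $(\alpha_2-\alpha_1-1)/\alpha_1=-\beta_2$, $\alpha_1+1-\alpha_2=\beta_2/\beta_1$, $(1-\alpha_2)/\alpha_1=\beta_2-1$, and $1/\beta^*-1=(1-\beta_2)/\beta_1$; for instance the leading entropy coefficient becomes $\tfrac{\alpha_1 c_1^{1/\alpha_1}}{\ln2}=\tfrac{\kappa_1}{\beta_1\ln2}$, the second collapses to $\tfrac{\kappa_2}{\beta_2\ln2}$, and analogously for the entropy-number expression.

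The main obstacle is Step~1: making the passage from the counting-function asymptotics to the eigenvalue asymptotics rigorous, i.e.\ propagating the remainder $o(\gamma^{-\beta_2})$ correctly through the (generalised) inversion and checking that the inequality $M_T(\lambda_n)\ge n$ --- which is not an equality when eigenvalues repeat --- still pins $\lambda_n$ down to within $o(n^{-\alpha_2})$ (this is exactly where the two-sided squeeze at $\lambda_n$ and $\lambda_n+\varepsilon$ is needed, and where the condition $\beta_2<\beta_1$ enters to ensure the second-order error survives). Once that relation is established, the rest is routine: the inversion is a verbatim reuse of Lemma~\ref{lem:inv}, and Steps~2--3 reduce to elementary algebra with exponents.
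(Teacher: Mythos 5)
Your proposal is correct and follows essentially the same route as the paper: the paper likewise uses the two-sided bound $M_T(\lambda_n+\eta_n) < n \leq M_T(\lambda_n)$ (with a suitably fast-decaying sequence $\eta_n$ playing the role of your $\varepsilon\downarrow 0$) to obtain $n=\kappa_1\lambda_n^{-\beta_1}+\kappa_2\lambda_n^{-\beta_2}+o(\lambda_n^{-\beta_2})$, then inverts via Lemma~\ref{lem:inv} and applies Theorem~\ref{cor: Metric entropy for polynomial ellipsoids second order} after exactly the parameter identification and condition conversion you describe. Your handling of the remainder and of repeated eigenvalues is consistent with (and slightly more explicit than) the paper's argument, and your constant simplifications match.
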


\begin{proof}
Denote the eigenvalues of $T$, ordered in non-increasing fashion, by $\{\lambda_n\}_{n\in\mathbb{N}^*}$ and let $\{\eta_n\}_{n\in\mathbb{N}^*}$ be an arbitrary sequence of positive real numbers.
From the definition of the eigenvalue-counting function, it follows that
\begin{equation*}
M_T(\lambda_n+\eta_n) < n \leq M_T(\lambda_n),
\quad \text{for all } n\in\mathbb{N}^*.
\end{equation*}
Invoking assumption (\ref{eq:asympevf}) on the eigenvalue-counting function and choosing $\{\eta_n\}_{n\in\mathbb{N}^*}$ to decay to zero fast enough, we then obtain
\begin{equation*}
n 
=\kappa_1\lambda_n^{-\beta_1} + \kappa_2 \lambda_n^{-\beta_2} + o_{n \to \infty} \left(\lambda_n^{-\beta_2}\right).
\end{equation*}
Inverting this expression by application of Lemma~\ref{lem:inv1} in the case where $\beta_1=\beta_2$ and $\kappa_2=0$, and Lemma~\ref{lem:inv} otherwise, yields
\begin{equation*}
\lambda_n
= \kappa_1^{1/\beta_1}\, n^{-1/\beta_1}
+ \frac{\kappa_1^{1/\beta_1-\beta_2/\beta_1}\kappa_2}{\beta_1}\, n^{\beta_2/\beta_1-1/\beta_1-1} 
+ o_{n \to \infty} \left(n^{\beta_2/\beta_1-1/\beta_1-1}\right).
\end{equation*}
Next, with a view towards application of Theorem~\ref{cor: Metric entropy for polynomial ellipsoids second order}, introducing the quantities
\begin{equation}\label{eq:jkvbfkjzfbezz}
c_1 \coloneqq \kappa_1^{1/\beta_1}, \ \
c_2 \coloneqq \frac{\kappa_1^{1/\beta_1-\beta_2/\beta_1}\kappa_2}{\beta_1}, \ \
\alpha_1 \coloneqq  1/\beta_1, \ \text{ and } \
\alpha_2 \coloneqq  1+1/\beta_1-\beta_2/\beta_1,
\end{equation}
allows to reformulate the two cases in (\ref{eq:condonthebetas}) as
\begin{equation*}
\text{either } \quad 
\alpha_1<\alpha_2 < \alpha_1+1/2,
\ \text{ or } \quad
\begin{cases}
\alpha_2 = \alpha_1, \text{ and } \\
c_2 = 0.
\end{cases} 
\end{equation*}
The hypotheses of Theorem~\ref{cor: Metric entropy for polynomial ellipsoids second order} are thus verified and its application, with $c_1$, $c_2$, $\alpha_1$, and $\alpha_2$ as defined in (\ref{eq:jkvbfkjzfbezz}), yields the desired result according to
\begin{equation*}
H \left(\varepsilon; T\right)
= \frac{\kappa_1}{\beta_1\ln(2)}
{\varepsilon}^{-\beta_1} 
+ \frac{\kappa_2}{\beta_2\ln(2)}
\varepsilon^{-\beta_2} 
+ o_{\varepsilon \to 0}\left(\varepsilon^{-\beta_2}\right)
\end{equation*}
and 
\begin{equation*}
\varepsilon_m (T) 
=  \left(\frac{{\kappa_1}}{{\beta_1}\ln(2)}\right)^{1/\beta_1}  m^{-1/\beta_1}
+ \frac{\kappa_2}{\kappa_1 \beta_2 }\left(\frac{{\kappa_1}}{{\beta_1}\ln(2)}\right)^{1/\beta^*}  m^{-1/\beta^*} 
+ o_{m \to \infty} \left(m^{-1/\beta^*} \right),
\end{equation*}
where $\beta^* \coloneqq \beta_1/(1+\beta_1-\beta_2)$.
\end{proof}

\section{Applications}\label{sec:appl}

We now put the general results developed in Section \ref{sec:entrcompop} to work. 
Concretely, we derive the entropy of the Landau-Pollak-Slepian operator 
and we find a precise asymptotic characterization of the metric entropy of unit balls in Sobolev spaces.
In both cases, significant improvements over the best known results in the literature are obtained.

\subsection{Entropy of the Landau-Pollak-Slepian operator}\label{sec:sampthm}

\noindent
The classical sampling theorem \cite{shannonCommunicationPresenceNoise1949} quantifies the minimum number of samples per unit of time needed to recover a 
strictly band-limited signal. 
This result essentially characterizes the information rate of band-limited signals.
Landau, Pollak, and Slepian \cite{landauProlateSpheroidalWave1961,slepianProlateSpheroidalWave1961} took this idea further by allowing for signals that are effectively band-and time-limited. 
The object of central interest in this theory is the Landau-Pollak-Slepian operator defined as follows.
For $r\in\mathbb{R}_+^*$ and compact subsets $\Omega$ and $\pazocal{W}$ of $\mathbb{R}^d$, one considers the sets
\begin{align*}
    &\pazocal{D}(r \Omega) 
    \coloneqq \left\{f \in L^2(\mathbb{R}^d) \mid \text{supp} (f) \subseteq r \Omega \right\} \quad \text{and } \\
    &\pazocal{F}(\pazocal{W}) 
    \coloneqq \left\{f \in L^2(\mathbb{R}^d) \mid \text{supp} \left(\pazocal{F}f\right) \subseteq  \pazocal{W} \right\}.
\end{align*}
Associating an orthogonal projection operator with each of these sets according to
\begin{equation*}
P_{r\Omega} \colon f \mapsto \mathbbm{1}_{\{r\Omega\}} f
\quad \text{and} \quad 
P_{\pazocal{W}} \colon f \mapsto \pazocal{F}^{-1} \mathbbm{1}_{\{\pazocal{W}\}} \pazocal{F} f,
\end{equation*}
leads to the definition of the Landau-Pollak-Slepian operator as
\begin{equation*}
P^{(r)}_{LPS}
\coloneqq P_{r\Omega} P_{\pazocal{W}} P_{r\Omega} \colon  L^2(\mathbb{R}^d)   \to  L^2(\mathbb{R}^d) .
\end{equation*}
We refer to \cite[Chapter~2]{daubechiesTenLecturesWavelets1992} and \cite[Chapter~20]{wongWaveletTransformsLocalization2002} for in-depth material on the Landau-Pollak-Slepian operator and to \cite{donoho1989uncertainty} for its role in the derivation of uncertainty principles. 

Next, we apply the method developed in Section~\ref{sec:entrcompop} to obtain
an exact characterization of the entropy rate
$\lim_{r\to \infty} H(\varepsilon; P^{(r)}_{LPS})/r^d$ of the Landau-Pollak-Slepian operator,
based on which an asymptotic result by Kolmogorov and Tikhomirov on the entropy rate of
effectively band- and time-limited signals can be turned into a non-asymptotic statement.
To the best of our knowledge, the entropy (rate) of the Landau-Pollak-Slepian operator has not been characterized before
in the literature.


\begin{theorem}\label{thm: LPS and sampling}
Let $d\in\mathbb{N}^*$ and let $\Omega$ and $\pazocal{W}$ be compact subsets of $\mathbb{R}^d$.
Then, we have
\begin{equation}\label{eq:khjvzebrhbkjfdnsfnz}
\lim_{r\to \infty} \frac{H \left(\varepsilon; P^{(r)}_{LPS} \right)}{r^d}
= \frac{2 \, \pazocal{H}^d(\Omega) \, \pazocal{H}^d(\pazocal{W})}{(2\pi)^d} \log \left(\varepsilon^{-1}\right),
\quad \text{for all } \varepsilon \in (0,1].
\end{equation}
\end{theorem}

The proof of Theorem~\ref{thm: LPS and sampling} 
proceeds by relating the problem at hand to that of covering an infinite-dimensional ellipsoid with semi-axes determined by the eigenvalues of the Landau-Pollak-Slepian operator. 
Indeed, it is well known (see, e.g., \cite[Lemma~1 and Theorem~1]{landauSzegoEingenvalueDistribution1975}) that the Landau-Pollak-Slepian operator is compact, self-adjoint, and has non-negative\footnote{
    Note that the Landau-Pollak-Slepian operator may have eigenvalues equal to zero. In contrast, 
    the general results in Section~\ref{sec:entrcompop} apply to positive (self-adjoint compact) operators.
    This does, however, not constitute any technical problems as here we shall only need aspects of the results in Section~\ref{sec:entrcompop} that
    do not require strict positivity of the eigenvalues.
}
eigenvalues $\{\lambda_n\}_{n\in\mathbb{N}^*}$ no larger than $1$ with the associated eigenvalue-counting function satisfying
    \begin{equation}\label{eq: eigenvalue rep lps}
    M_r(\gamma) = \left(\frac{r}{2\pi}\right)^d \pazocal{H}^d(\Omega) \, \pazocal{H}^d(\pazocal{W}) + o_{r \to \infty}\left(r^d\right), \quad \text{for all } \gamma \in (0,1),
    \end{equation}
where the dependence on $\gamma$ of the right-hand side is hidden in the $o$-term. 

\begin{center}
\begin{tikzpicture}[scale=1]
    \draw[->] (-1,0)-- (11,0);
    \draw[->] (0,-0.25) node[below] {0} -- (0,2.5) ;

    \draw[dashed, red!75!black] (-.1,.75) node[left] {$\gamma$} -- (10.5,.75);
    \draw (-.1,2) node[left] {1} -- (.1,2);

    \draw (.5,-0.25) node[below] {1} -- (.5,.25) ;  
    \draw (1,-0.25) node[below] {2} -- (1,.25) ;  
    \draw (1.5,-0.25) node[below] {3} -- (1.5,.25) ;  
    \draw (2,-0.25) node[below] {4} -- (2,.25) ;  
    \draw (2.5,-0.25) node[below] {$\cdots$} -- (2.5,.25) ;  
    \draw (6,-0.25) node[below] {$M_r(\gamma)$} -- (6,.25) ;

    \draw[fill=green!75!black, fill opacity=.5, draw opacity=.5] (.5,1.99) circle (0.075);
    \draw[fill=green!75!black, fill opacity=.5, draw opacity=.5] (1,1.985) circle (0.075);
    \draw[fill=green!75!black, fill opacity=.5, draw opacity=.5] (1.5,1.98) circle (0.075);
    \draw[fill=green!75!black, fill opacity=.5, draw opacity=.5] (2,1.975) circle (0.075);
    \draw[fill=green!75!black, fill opacity=.5, draw opacity=.5] (2.5,1.97) circle (0.075);
    \draw[fill=green!75!black, fill opacity=.5, draw opacity=.5] (3,1.965) circle (0.075);
    \draw[fill=green!75!black, fill opacity=.5, draw opacity=.5] (3.5,1.96) circle (0.075);
    \draw[fill=green!75!black, fill opacity=.5, draw opacity=.5] (4,1.955) circle (0.075);
    \draw[fill=green!75!black, fill opacity=.5, draw opacity=.5] (4.5,1.95) circle (0.075);
    \draw[fill=green!75!black, fill opacity=.5, draw opacity=.5] (5,1.9) circle (0.075);
    \draw[fill=green!75!black, fill opacity=.5, draw opacity=.5] (5.5,1.6) circle (0.075);
    \draw[fill=green!75!black, fill opacity=.5, draw opacity=.5] (5.75,1.2) circle (0.075);
    \draw[fill=green!75!black, fill opacity=.5, draw opacity=.5] (6,.8) circle (0.075);
    \draw[fill=green!75!black, fill opacity=.5, draw opacity=.5] (6.25,.5) circle (0.075);
    \draw[fill=green!75!black, fill opacity=.5, draw opacity=.5] (6.55,.25) circle (0.075);
    \draw[fill=green!75!black, fill opacity=.5, draw opacity=.5] (7,.12) circle (0.075);
    \draw[fill=green!75!black, fill opacity=.5, draw opacity=.5] (7.5,.08) circle (0.075);
    \draw[fill=green!75!black, fill opacity=.5, draw opacity=.5] (8,.06) circle (0.075);
    \draw[fill=green!75!black, fill opacity=.5, draw opacity=.5] (8.5,.05) circle (0.075);
    \draw[fill=green!75!black, fill opacity=.5, draw opacity=.5] (9,.045) circle (0.075);
    \draw[fill=green!75!black, fill opacity=.5, draw opacity=.5] (9.5,.04) circle (0.075);
    \draw[fill=green!75!black, fill opacity=.5, draw opacity=.5] (10,.035) circle (0.075);

\end{tikzpicture}

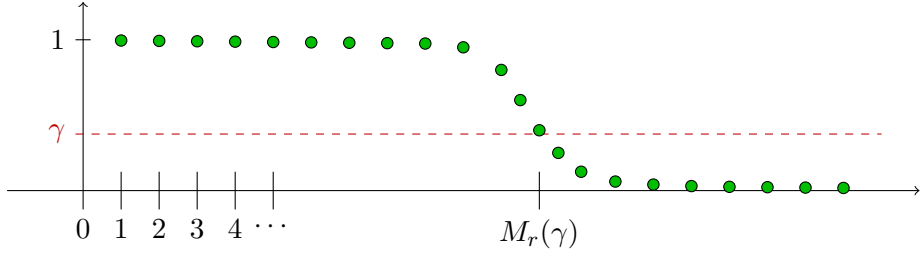
\captionof{figure}{Eigenvalue distribution of the Landau-Pollak-Slepian operator.}
\label{fig:LPSeig}
\end{center}

In a second step, the problem is then reduced to covering a finite-dimensional ellipsoid obtained by carefully 
thresholding the infinite-dimensional ellipsoid under consideration. 
Specifically, the threshold specifies the $\varepsilon$-dependent effective dimension of the infinite-dimensional ellipsoid and is chosen as $M_r(\gamma)$ for $\gamma$ suitably depending on $\varepsilon$, see Figure 1. 
The proof is completed by applying results from \cite{firstpaper} on the covering of finite-dimensional ellipsoids.

\begin{proof}
    If either $\pazocal{H}^d(\Omega)=0$ or $\pazocal{H}^d(\pazocal{W})=0$, the Landau-Pollak-Slepian operator vanishes 
    and there is nothing left to prove as both sides in (\ref{eq:khjvzebrhbkjfdnsfnz}) are equal to zero.
    In what follows, we can therefore assume that $\Omega$ and $\pazocal{W}$ both have non-zero $d$-dimensional Hausdorff measure.

    The proof will be effected by sandwiching the entropy rate 
        between matching lower and upper bounds.
    To this end, let us fix 
    $\varepsilon\in(0,1]$, $\gamma\in(0,1)$ and
    denote the image of the unit ball in $L^2(\mathbb{R}^d)$ under the operator $P^{(r)}_{LPS}$ as $\pazocal{E}^{(r)}$.
        By the same arguments as used in the proof of Theorem~\ref{cor: Metric entropy for polynomial ellipsoids second order},
    the set $\pazocal{E}^{(r)}$ is isometric to the infinite-dimensional $\ell^2$-ellipsoid with semi-axes 
    $\{\lambda_n\}_{n\in\mathbb{N}^*}$. 
    We henceforth identify $\pazocal{E}^{(r)}$ with this ellipsoid and let
    $\pazocal{E}^{(r)}_-$ stand for the finite-dimensional ellipsoid obtained from $\pazocal{E}^{(r)}$ by retaining the $M_r(\gamma)$ 
    largest semi-axes. 
    Note that $M_r(\gamma)$ is guaranteed to be non-zero for $r$ large enough and recall that we are interested in the large-$r$ limit.
    As covering the infinite-dimensional ellipsoid $\pazocal{E}^{(r)}$ requires at least as many covering balls as needed to cover the corresponding finite-dimensional ellipsoid $\pazocal{E}^{(r)}_-$, we have
    \begin{equation}\label{eq: tkjkjehzgsmmqdpre}
    H \left(\varepsilon; \pazocal{E}^{(r)}, \|\cdot\|_2 \right)
    \geq H \left(\varepsilon; \pazocal{E}^{(r)}_-, \|\cdot\|_2 \right).
    \end{equation}
    Now, a direct application of Lemma~\ref{lem:lbentropy} in the Appendix, with $d=M_r(\gamma)$, yields
    \begin{equation}\label{eq: tkjkjehzgsmmqd}
    \frac{H \left(\varepsilon; \pazocal{E}^{(r)}_-, \|\cdot\|_2 \right)}{2 M_r(\gamma)}
    \geq \log\left(\varepsilon^{-1}\right) + \frac{1}{M_r(\gamma)}\sum_{n=1}^{M_r(\gamma)} \log\left(\lambda_n\right) .
    \end{equation}
    Note that, by definition of $M_r(\gamma)$, the eigenvalues $\lambda_n$ appearing in (\ref{eq: tkjkjehzgsmmqd}) are all greater than or equal to $\gamma$.
    This yields the bound 
    \begin{equation}\label{eq: jjhgfhjhjjkosqqq1}
    \frac{1}{M_r(\gamma)}\sum_{n=1}^{M_r(\gamma)} \log\left(\lambda_n\right)
    \geq \log (\gamma).
    \end{equation}
    Now, combining (\ref{eq: eigenvalue rep lps})-(\ref{eq: jjhgfhjhjjkosqqq1}), we get 
    \begin{align*}
    \lim_{r\to \infty} \frac{H \left(\varepsilon; \pazocal{E}^{(r)}, \|\cdot\|_2 \right)}{r^d}
    &\geq \lim_{r\to \infty} \frac{H \left(\varepsilon; \pazocal{E}_-^{(r)}, \|\cdot\|_2 \right)}{r^d}\\
    &\geq 2 \log\left(\gamma \, \varepsilon^{-1}\right) \lim_{r\to \infty} \frac{ M_r(\gamma)}{r^d} \\
    & = \frac{2 \, \pazocal{H}^d(\Omega) \, \pazocal{H}^d(\pazocal{W}) }{(2\pi)^d}\log\left(\gamma \, \varepsilon^{-1}\right).
    \end{align*}
    In particular, taking $\gamma$ arbitrarily close to $1$, it follows that
    \begin{align}\label{eq: lowerboundonthepls}
    \lim_{r\to \infty} \frac{H \left(\varepsilon; \pazocal{E}^{(r)}, \|\cdot\|_2 \right)}{r^d}
    \geq \frac{2 \, \pazocal{H}^d(\Omega) \, \pazocal{H}^d(\pazocal{W}) }{(2\pi)^d}\log\left(\varepsilon^{-1}\right).
    \end{align}

    \noindent
    The proof will be completed by establishing an upper bound on the entropy rate matching the lower bound (\ref{eq: lowerboundonthepls}).
    To this end, fix ${\tau\in(0,1)}$ and consider the 
    ellipsoid $\pazocal{E}^{(r)}_+$ obtained from $\pazocal{E}^{(r)}$ by retaining the $M_r(\tau \varepsilon)$ largest semi-axes.
    Note that $M_r(\tau \varepsilon)$ is well-defined by $\tau\varepsilon\in(0,1)$, and, again,
    guaranteed to be non-zero for $r$ large enough.
    As the semi-axes corresponding to the dimensions not retained in the transition from $\pazocal{E}^{(r)}$ to $\pazocal{E}^{(r)}_+$ have  length smaller than $\tau \varepsilon$,
    every $(1-\tau)\varepsilon$-covering of $\pazocal{E}^{(r)}_+$ can be turned into an $\varepsilon$-covering of $\pazocal{E}^{(r)}$, simply by completing the components of the covering ball centers of $\pazocal{E}^{(r)}_+$ by an infinite sequence of zeros.
    This observation translates into 
    \begin{equation}\label{eq: tkjkjehzgsmmqdprett}
    H \left(\varepsilon; \pazocal{E}^{(r)}, \|\cdot\|_2 \right)
    \leq H \left((1-\tau)\varepsilon; \pazocal{E}^{(r)}_+, \|\cdot\|_2 \right)
    \leq H \left((1-\tau)\varepsilon; \pazocal{B}^{(r)}, \|\cdot\|_2 \right),
    \end{equation}
    where $\pazocal{B}^{(r)}$ denotes the unit ball in $\mathbb{C}^{M_r(\tau \varepsilon)}$ and the second inequality follows from the fact that the semi-axes are given by the eigenvalues of $P^{(r)}_{LPS}$ and hence have length smaller than or equal to one. 
    From Lemma~\ref{lem:densityarg} in the Appendix with $d= {M_r(\tau\varepsilon)}$, we can deduce the existence of 
    a sequence $\kappa$ with $\lim_{M \to \infty} \kappa(M) = 1$, such that
   \begin{equation*}
   N\left((1-\tau)\varepsilon; \pazocal{B}^{(r)}, \|\cdot\|_2 \right)^{1/(2 M_r(\tau \varepsilon))}  
   \leq \frac{\kappa\left({M_r(\tau\varepsilon)}\right)}{\left(1-\tau\right) \,\varepsilon  } .
   \end{equation*}
   Taking logarithms, we obtain
    \begin{equation}\label{eq: tkjkjehzgsmmqdtt}
    \frac{H \left((1-\tau)\varepsilon; \pazocal{B}^{(r)}, \|\cdot\|_2 \right)}{2 M_r(\tau \varepsilon)}
    \leq \log\left(\left[(1-\tau)\varepsilon\right]^{-1}\right) + \log\left(\kappa\left({M_r(\tau\varepsilon)}\right)\right).
    \end{equation}
    Using (\ref{eq: tkjkjehzgsmmqdtt}) in (\ref{eq: tkjkjehzgsmmqdprett}) together with (\ref{eq: eigenvalue rep lps}), it follows, for all $\tau\in(0,1)$, that
    \begin{align*}
    \lim_{r\to \infty} \frac{H \left(\varepsilon; \pazocal{E}^{(r)}, \|\cdot\|_2 \right)}{r^d}
    &\leq \lim_{r\to \infty} \frac{H \left((1-\tau)\varepsilon; \pazocal{B}^{(r)}, \|\cdot\|_2 \right)}{r^d}\\
    &\leq \lim_{r\to \infty} \frac{2  M_r(\tau \varepsilon)}{r^d} 
    \left\{\log\left(\left[(1-\tau)\varepsilon\right]^{-1}\right) + \log\left(\kappa\left({M_r(\tau\varepsilon)}\right)\right) \right\}\\
    &= \frac{2 \, \pazocal{H}^d(\Omega) \, \pazocal{H}^d(\pazocal{W})}{(2\pi)^d} \log \left(\left[(1-\tau)\varepsilon\right]^{-1}\right),
    \end{align*}
where the last step is by $\lim_{M \to \infty} \kappa(M) =1$ combined with $\lim_{r \to \infty} M_r(\gamma) = \infty$. 
We can finally choose $\tau$ arbitrarily small to obtain
        \begin{equation*}
    \lim_{r\to \infty} \frac{H \left(\varepsilon; \pazocal{E}^{(r)}, \|\cdot\|_2 \right)}{r^d} 
    \leq \frac{2 \, \pazocal{H}^d(\Omega) \, \pazocal{H}^d(\pazocal{W})}{(2\pi)^d} \log \left(\varepsilon^{-1}\right).
    \end{equation*}
    This concludes the proof.
\end{proof}

We now discuss the implications of Theorem~\ref{thm: LPS and sampling} and, for simplicity, take $d=1$,
$\Omega=[-1,1]$, and $\pazocal{W}=[-\sigma, \sigma],\sigma \in \mathbb{R}_{+}^*$. 
In this case, the image of the unit ball in $L^2(\mathbb{R})$ under $P^{(T)}_{LPS}$, with $T \in \mathbb{R}^*_+$, is obtained by 
localizing strictly band-limited (namely to $\pazocal{W}$) functions to the time-interval 
$[-T,T]$.
We denote the resulting function class by $B^{(T)}_{\sigma}$ and recall the following result due to Kolmogorov and Tikhomirov \cite[Chapter~7, Theorem~XXII]{shiryayevSelectedWorksKolmogorov1993}, \cite[Theorem~8]{jagermanEEntropyApproximationBandlimited1969}
\begin{equation}\label{eq: LPS me sampling tikh2}
    \lim_{T\to \infty} \frac{H \left(\varepsilon; B^{(T)}_{\sigma}, \|\cdot\|_2 \right)}{2T}
\stackrel{\varepsilon\to 0}{\sim} \frac{2 \sigma}{\pi} \log \left(\varepsilon^{-1}\right),
\end{equation}
which says that asymptotically, namely as $\varepsilon\to 0$, the number of degrees of freedom per unit of time of signals band-limited to $\pazocal{W}=[-\sigma, \sigma]$ 
is ${2 \sigma}/{\pi}$ and hence determined by the bandwidth $2\sigma$. 
The rationale behind this interpretation in terms of degrees of freedom derives itself from the observation that
the metric entropy of finite intervals on the reals is of order $\log(\varepsilon^{-1})$, as 
$\varepsilon \rightarrow 0$. 
Therefore, the factor $2\sigma/\pi$ on the right-hand side of (\ref{eq: LPS me sampling tikh2}) quantifies the number of information-carrying scalars per unit of time. 
We emphasize that this interpretation applies only asymptotically in $\varepsilon$. 
Consequently, for $\varepsilon \in (0,1]$, in principle the effective number of degrees of freedom could depend on $\varepsilon$, of course in a manner 
ensuring compatibility with the asymptotics in (\ref{eq: LPS me sampling tikh2}).

The result in Theorem~\ref{thm: LPS and sampling} above, i.e.,
\begin{equation}\label{eq: LPS me sampling tikh}
    \lim_{T\to \infty} \frac{H \left(\varepsilon; B^{(T)}_{\sigma}, \|\cdot\|_2 \right)}{2T}
= \frac{2 \sigma}{\pi} \log \left(\varepsilon^{-1}\right),
\quad \text{for all } \varepsilon \in (0,1],
\end{equation}
hence constitutes a substantial improvement over the literature as it proves equality in (\ref{eq: LPS me sampling tikh2}) for all $\varepsilon \in (0,1]$.
Specifically, this shows that the dimension counting argument that has been employed in the literature for decades is, in fact, exact for all $\varepsilon \in (0,1]$.



\subsection{Metric entropy of unit balls in Sobolev spaces}\label{sec:applsob}

\noindent
Standard results on the metric entropy of unit balls in function spaces defined through regularity constraints take the form
\begin{equation*}
c \, \varepsilon^{-d/k}\left(1+o_{\varepsilon\to 0} (1)\right)
\leq H \left( \varepsilon ; \mathcal{F}_k, \|\cdot\|_2 \right)
\leq C \, \varepsilon^{-d/k}\left(1+o_{\varepsilon\to 0} (1)\right),
\end{equation*}
for some constants $c, C>0$,
where $d \in \mathbb{N}^*$ stands for the dimension of the domain and $k\in \mathbb{N}^*$ is the degree of smoothness.
We now consider the case of unit balls $\mathcal{F}_k$ in Sobolev spaces 
and show how Theorem~\ref{cor: Metric entropy for polynomial ellipsoids second order} and Corollary~\ref{cor: Metric entropy for polynomial ellipsoids second order ecf} lead to a full characterization of the first-order term in the asymptotic expansion of the metric entropy of $\mathcal{F}_k$, that is, we determine the exact value of $c=C$.
Furthermore, we also obtain the second-order term under mild regularity constraints on the domain. 
Interestingly, while it is known that the first-order term is proportional to the volume of the domain (see \cite[Theorem~20]{secondpaper}), 
our result shows that the second-order term is proportional to its perimeter.

Concretely, for a bounded open set $\Omega \subset \mathbb{R}^d$, we consider the Sobolev space $W^{k,2}_0(\Omega)$ of order $k$ 
(i.e., the closure of $C^\infty_0(\Omega)$ in $L^2(\Omega)$ with respect to the topology induced by the norm (\ref{eq:defnormsob}) below; see, e.g., \cite[Chapter~3]{adamsSobolevSpaces2003} or
\cite[Chapter~9.4]{brezisFunctionalAnalysisSobolev2011}) equipped with the norm
\begin{equation}\label{eq:defnormsob}
\left\| \cdot \right\|_{k, \Omega} 
\colon f \longmapsto \left[\left\|f\right\|^2_{L^2(\Omega)}+\sum_{|\alpha|=k} \left\|D^{\alpha}f\right\|^2_{L^2(\Omega)}\right]^{1/2}, 
\end{equation}
where we used the standard multi-index notation, that is,  for $\alpha=(\alpha_1, \dots, \alpha_d)$, we have $|\alpha|=\sum_{j=1}^{d}\alpha_{j}$ and $D^\alpha = \partial_1^{\alpha_1} \dots \, \partial_d^{\alpha_d}$.
The best known result is due to Donoho (see \cite{donohoCountingBitsKolmogorov2000} or the remark after \cite[Corollary 2.4]{luschgySharpAsymptoticsKolmogorov2004}) and pertains to the one-dimensional case.
Specifically, for
 $\Omega = (0,2\pi)$, Donoho quantifies
 the constant in the leading term of the asymptotic expansion of the metric entropy $H (\varepsilon; \mathcal{F}_k, \| \cdot \|_{L^2(\Omega)}  )$
 of the unit ball $\mathcal{F}_k \coloneqq \{ f \in W^{k,2}_0(\Omega) \mid \left\| f \right\|_{k, \Omega} \leq 1 \}$ 
 according to
\begin{equation}\label{eq:donohostateoftheart}
H \left(\varepsilon; \mathcal{F}_k, \left\| \cdot \right\|_{L^2(\Omega)}  \right)
\stackrel{\varepsilon\to 0}{\sim} \frac{2 \, k}{\ln(2)} \, \varepsilon^{-\frac{1}{k}}.
\end{equation}
In the following, we simply write $H (\varepsilon)$ for $H (\varepsilon;\mathcal{F}_k, \left\| \cdot \right\|_{L^2(\Omega)}  )$. 
In Theorem~\ref{thm:sob} below, we extend \eqref{eq:donohostateoftheart} in two aspects.
First, we allow for general bounded open domains $\Omega \subset \mathbb{R}^d$ in arbitrary finite dimensions $d$. 
Second, we provide---under certain regularity constraints on $\Omega \subset \mathbb{R}^d$---an exact characterization of the second-order term in the asymptotic expansion of $H (\varepsilon)$.
Both of these extensions have no counterpart in the existing literature.

Our result relies on a spectral analysis of the Laplacian ${-\Delta \coloneqq -\partial_1^2 - \dots - \partial_d^2}$.
More concretely, we resort to Weyl's law for the Laplacian to characterize the asymptotic behavior of its eigenvalue-counting function for bounded domains $\Omega$ with smooth boundary $\partial \Omega$ and such that the measure of all periodic billiards is zero.
We refer to \cite{ivriiSecondTermSpectral1980}, \cite[Corollary~29.3.4]{hormanderAnalysisLinearPartial2009}, \cite[Chapter~1.2]{geisingerSemiclassicalLimitDirichlet2011}, and the survey \cite{Ivrii_2016} for a detailed discussion of Weyl's law and the technical condition
we require on $\partial \Omega$.
To the best of our knowledge, Weyl's law has not appeared before in the context of metric entropy of function classes.

\begin{theorem}\label{thm:sob}
Let $d, k \in \mathbb{N}^*$ and let $\Omega \subset \mathbb{R}^d$ be a bounded open subset of $\mathbb{R}^d$.
For a given set $S\subset \mathbb{R}^d$, we define a rescaled version of its $r$-dimensional Hausdorff measure $\pazocal{H}^r(S)$, $r\in\{1, \dots, d\}$,
according to
\begin{equation}\label{eq:contentofset}
\chi_r (S) \coloneqq \frac{\omega_r }{r \, (2\pi)^r \, \ln(2)}\,  \pazocal{H}^r(S),
\end{equation}
with $\omega_r$ the volume of the unit ball in $\mathbb{R}^r$.
Then, 
\begin{enumerate}[label=(\roman*)]
\item
the metric entropy and the entropy numbers of the unit ball in $W^{k,2}_0(\Omega)$ equipped with the norm (\ref{eq:defnormsob}) satisfy
\begin{equation*}
H \left(\varepsilon\right) 
= k\, \chi_d(\Omega) \, \varepsilon^{-\frac{d}{k}} + o_{\varepsilon \to 0}\left(\varepsilon^{-\frac{d}{k}}\right)
\end{equation*}
and
\begin{equation*}
\varepsilon_m
= {\left(k \, \chi_d (\Omega)\right)^{\frac{k}{d}}}\, m^{-\frac{k}{d}} + o_{m\to \infty}\left({m^{-\frac{k}{d}}}\right);
\end{equation*}

\item
if we further assume that $d \geq 3 $ and the boundary $\partial \Omega$ is smooth and such that the measure of the periodic billiards in $\Omega$ is zero, we have
\begin{equation*}
H \left(\varepsilon\right) 
= k\, \chi_d(\Omega) \, {\varepsilon}^{-\frac{d}{k}}  
- \frac{k\, \chi_{d-1}(\partial \Omega)}{4}\, 
\varepsilon^{-\frac{d-1}{k}} 
+ o_{\varepsilon \to 0}\left(\varepsilon^{-\frac{d-1}{k}}\right),
\end{equation*}
and
\begin{equation*}
\varepsilon_m 
= {\left(k \, \chi_d (\Omega)\right)^{\frac{k}{d}}}\, m^{-\frac{k}{d}}
 \ - \frac{k\,  \chi_{d-1}(\partial \Omega)}{4\, d\, \chi_d (\Omega)}\,\,{\left(k \, \chi_d (\Omega)\right)^{\frac{k+1}{d}}} m^{-\frac{k+1}{d}}
+ o_{m\to \infty}\left({m^{-\frac{k+1}{d}}}\right).
\end{equation*}
\end{enumerate}
\end{theorem}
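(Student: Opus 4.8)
The plan is to realize the unit ball of $W^{k,2}_0(\Omega)$ as the image of the unit ball of $L^2(\Omega)$ under a compact self-adjoint operator built from the Dirichlet Laplacian, and then to invoke Corollary~\ref{cor: Metric entropy for polynomial ellipsoids second order ecf}. Let $\{\mu_n\}_{n\in\mathbb{N}^*}$ denote the Dirichlet eigenvalues of $-\Delta$ on $\Omega$, ordered non-decreasingly, with eigenfunctions $\{\varphi_n\}$ forming an orthonormal basis of $L^2(\Omega)$. First I would observe that on the span of the $\varphi_n$ the norm $\|\cdot\|_{k,\Omega}$ in (\ref{eq:defnormsob}) is, up to the usual integration-by-parts identities for the polyharmonic operator $(-\Delta)^k$, equivalent to the norm $f\mapsto\big(\sum_n (1+\mu_n^k)\,|\langle f,\varphi_n\rangle|^2\big)^{1/2}$; in fact, with the precise norm (\ref{eq:defnormsob}) and homogeneous boundary conditions one gets exactly $\|f\|_{k,\Omega}^2=\sum_n(1+\mu_n^k)|\langle f,\varphi_n\rangle|^2$. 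Hence the unit ball of $W^{k,2}_0(\Omega)$ is isometric, in the $L^2$ metric, to the ellipsoid in $\ell^2$ with semi-axes $\lambda_n\coloneqq(1+\mu_n^k)^{-1/2}$, i.e. it is the image of $\pazocal{B}_{L^2}$ under the compact self-adjoint operator $T\coloneqq(\mathrm{Id}+(-\Delta)^k)^{-1/2}$. (A small technical point: one must check that the closure is already attained, or equivalently appeal to the density arguments of \cite{firstpaper} invoked earlier.)

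Next I would feed the Weyl asymptotics for $\{\mu_n\}$ into the eigenvalue-counting function of $T$. For part~(i), the classical Weyl law gives $\#\{n:\mu_n\le t\}=(2\pi)^{-d}\omega_d\,\pazocal{H}^d(\Omega)\,t^{d/2}+o(t^{d/2})$ as $t\to\infty$; translating $\lambda\ge\gamma \iff \mu_n\le \gamma^{-2}-1$ yields
\[
M_T(\gamma)=\frac{\omega_d\,\pazocal{H}^d(\Omega)}{(2\pi)^d}\,\gamma^{-d/k}+o_{\gamma\to0}\big(\gamma^{-d/k}\big),
\]
so Corollary~\ref{cor: Metric entropy for polynomial ellipsoids second order ecf} applies with $\beta_1=d/k$, $\kappa_1=\omega_d\pazocal{H}^d(\Omega)/(2\pi)^d$, and $\kappa_2=0$ (the degenerate branch $\beta_1=\beta_2$), giving $H(\varepsilon)=\frac{\kappa_1}{\beta_1\ln 2}\varepsilon^{-d/k}+o(\varepsilon^{-d/k})$, which is exactly $k\,\chi_d(\Omega)\varepsilon^{-d/k}+o(\cdot)$ after substituting (\ref{eq:contentofset}); the entropy-number statement follows from the same corollary. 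For part~(ii), under the smoothness and zero-measure-of-periodic-billiards hypotheses, the two-term Weyl law of Ivrii/Hörmander refines this to $\#\{n:\mu_n\le t\}=(2\pi)^{-d}\omega_d\pazocal{H}^d(\Omega)t^{d/2}-\tfrac14(2\pi)^{-(d-1)}\omega_{d-1}\pazocal{H}^{d-1}(\partial\Omega)t^{(d-1)/2}+o(t^{(d-1)/2})$. Substituting $t=\gamma^{-2}-1$ and expanding, the correction term becomes $\kappa_2\gamma^{-\beta_2}$ with $\beta_2=(d-1)/k$ and $\kappa_2=-\tfrac14\omega_{d-1}\pazocal{H}^{d-1}(\partial\Omega)/(2\pi)^{d-1}$; one then checks that the hypothesis $d>6k/(2k-1)$ is precisely what guarantees $\beta_1(1-\frac{2}{6+\beta_1})<\beta_2<\beta_1$ in (\ref{eq:condonthebetas}) with these values. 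Plugging into Corollary~\ref{cor: Metric entropy for polynomial ellipsoids second order ecf} and simplifying with (\ref{eq:contentofset}) yields the two displayed asymptotics.

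The main obstacle, and the step requiring the most care, is the passage from the Weyl asymptotics for $\mu_n$ (stated for the counting function of $-\Delta$) to the asymptotics (\ref{eq:asympevf}) for $M_T(\gamma)$ with a clean \emph{second} term: one must expand $t=\gamma^{-2}-1$ inside $t^{d/2}$ and $t^{(d-1)/2}$ and verify that the resulting lower-order corrections (e.g. the $O(\gamma^{-d/k+2/k})$ term coming from $(\gamma^{-2}-1)^{d/2}=\gamma^{-d/k}(1-\gamma^{2/k})^{\cdots}$—wait, more precisely $(\gamma^{-2}-1)^{d/2}=\gamma^{-d}(1-\gamma^2)^{d/2}$, giving a correction of order $\gamma^{-d+2}=\gamma^{-(d/k-? )}$) are genuinely absorbed into the $o(\gamma^{-\beta_2})$ remainder, which is exactly where the constraint $d>6k/(2k-1)$ — equivalently $\beta_2>\beta_1(1-\frac{2}{6+\beta_1})$ — does its work, and where one must also confirm that the binomial correction of order $\gamma^{-(d-2)/k}$ is dominated by the Weyl surface term of order $\gamma^{-(d-1)/k}$, i.e. that $(d-2)/k<(d-1)/k$, which is automatic. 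The remaining verifications — the norm identity on $\mathrm{span}\{\varphi_n\}$, the isometry to the $\ell^2$ ellipsoid, and the arithmetic reconciling $\kappa_1,\kappa_2$ with $\chi_d,\chi_{d-1}$ — are routine.
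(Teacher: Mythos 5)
Your proposal follows essentially the same route as the paper's own proof: you realize the unit ball of $W^{k,2}_0(\Omega)$ as an ellipsoid in $L^2(\Omega)$ via the operator $\left[\mathrm{Id}+(-\Delta)^{k}\right]^{-1}$ (the paper works with this operator and takes square roots of its eigenvalues, which is equivalent to your square-root operator), convert the one-term and two-term Weyl laws into the counting-function asymptotics (\ref{eq:asympevf}) through the substitution $\gamma^{-2}-1$, check that $d>6k/(2k-1)$ yields (\ref{eq:condonthebetas}), and apply Corollary \ref{cor: Metric entropy for polynomial ellipsoids second order ecf}. The argument is correct, including your observation that the binomial correction of order $\gamma^{-(d-2)/k}$ is absorbed into the $o(\gamma^{-(d-1)/k})$ remainder, so no further comparison is needed.
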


\noindent
The statements in Theorem~\ref{thm:sob} hold identically when
the norm (\ref{eq:defnormsob}) is replaced by the equivalent norm
\begin{equation}\label{eq:defnormsob2}
\left\| \cdot \right\|_{k, \Omega}'
\colon f\longmapsto \left[\sum_{|\alpha|=k} \left\|D^{\alpha}f\right\|^2_{L^2(\Omega)} \right]^{1/2}.
\end{equation}
We refer to \cite[Corollary 6.31]{adamsSobolevSpaces2003} for a proof of the equivalence of the norms (\ref{eq:defnormsob}) and (\ref{eq:defnormsob2}).
Our choice to state Theorem~\ref{thm:sob} in terms of the norm (\ref{eq:defnormsob}) is motivated by the desire to be compatible with Donoho's result (\ref{eq:donohostateoftheart}).
Indeed, for $d=1$ and $\Omega=(0,2\pi)$, 
part (i) of Theorem~\ref{thm:sob} recovers (\ref{eq:donohostateoftheart}) according to
\begin{equation*}
    H(\varepsilon) 
     \stackrel{\varepsilon \to 0}{\sim} k\, \chi_1 ((0,2\pi)) \, \varepsilon^{-\frac{1}{k}} 
    \stackrel{\varepsilon \to 0}{\sim} k\, \frac{2\cdot 2\pi}{1 \cdot (2\pi)^1 \ln(2)} \, \varepsilon^{-\frac{1}{k}} 
\stackrel{\varepsilon\to 0}{\sim} \frac{2\, k}{\ln(2)} \, \varepsilon^{-\frac{1}{k}} .
\end{equation*}


\begin{proof}
We first introduce the positive self-adjoint compact operator
\begin{equation}\label{eq:defT}
T \coloneqq \left[\text{Id} + (-\Delta)^{(k)}\right]^{-{1/2}},
\end{equation}
where $(-\Delta)^{(k)}$ stands for the $k$-fold application of the Laplacian, 
and note that, for all $f \in C_0^{\infty}(\Omega)$,
\begin{equation}\label{eq:norm-equivalence}
\|f\|_{k, \Omega} = \sqrt{\left\langle T^{-1} f , T^{-1} f \right\rangle_{L^2(\Omega)}}.
\end{equation}
By density of $C_0^{\infty}(\Omega)$ in $W^{k,2}_0(\Omega)$ (see \cite[Chapter~9, Remark~18]{brezisFunctionalAnalysisSobolev2011}), we conclude from
(\ref{eq:norm-equivalence}) that the unit ball under the Sobolev norm $\|\cdot\|_{k, \Omega}$ is an ellipsoid in $L^2(\Omega)$ with semi-axes given by the 
eigenvalues of $T$.
Next, let 
\begin{equation*}
\widetilde{M}_{\Delta} \colon \gamma  \in (0,\infty)
\mapsto \# \left\{n \in \mathbb{N}^* \mid \lambda_n \leq \gamma, \text{ where } \{ \lambda_n\}_{n \in \mathbb{N}^*} \text{ are the eigenvalues of } -\Delta  \right\}.
\end{equation*}
Note that $\widetilde{M}_{\Delta}$ counts the number of eigenvalues of $-\Delta$ below a given threshold, whereas $M_T$ counts those above. 
From the definition of $T$ in (\ref{eq:defT}), it follows that
\begin{equation*}
M_T (\gamma)
= \widetilde{M}_{\Delta} \left(\sqrt[k]{\gamma^{-2}-1}\right), \quad \text{for all } \gamma \in (0,1).
\end{equation*}
The Weyl law for the Laplacian (\cite[Chapter~9.5]{shubinInvitationPartialDifferential2020}) now yields
\begin{align*}
M_T (\gamma) & = \frac{\omega_d \pazocal{H}^d(\Omega)}{(2\pi)^d}\left[{\gamma^{-2}-1}\right]^{\frac{d}{2k}}  + o_{\gamma \to 0}\left(\left[{\gamma^{-2}-1}\right]^{\frac{d}{2k}}  \right) \\
&= \frac{\omega_d \pazocal{H}^d(\Omega)}{(2\pi)^d} \gamma^{-\frac{d}{k}} + o_{\gamma \to 0}\left(\gamma^{-\frac{d}{k}} \right).
\end{align*}
Upon application of Corollary \ref{cor: Metric entropy for polynomial ellipsoids second order ecf}, with the choices 
\begin{equation*}
\kappa_1 = \frac{\omega_d \pazocal{H}^d(\Omega)}{(2\pi)^d}, \ \
\kappa_2 = 0, \quad \text{and} \quad 
\beta_1=\beta_2 = \frac{d}{k},
\end{equation*}
we obtain the first desired result, namely 
\begin{equation*}
H \left(\varepsilon\right) 
= \frac{k \, \omega_d \,  \pazocal{H}^d(\Omega)}{d \, (2\pi)^d \, \ln(2)} \varepsilon^{-\frac{d}{k}} + o_{\varepsilon \to 0}\left(\varepsilon^{-\frac{d}{k}}\right)
\end{equation*}
and 
\begin{equation*}
\varepsilon_m
= \left[\frac{k \, \omega_d \,  \pazocal{H}^d(\Omega)}{d \,  (2\pi)^d \, \ln(2)}\right]^{\frac{k}{d}}{m^{-\frac{k}{d}}} + o_{m\to \infty}\left({m^{-\frac{k}{d}}}\right).
\end{equation*}
Under the assumptions of (ii) in the theorem statement, the two-term Weyl law for the Laplacian (see \cite{ivriiSecondTermSpectral1980}) allows us to conclude that 
\begin{align*}
M_T (\gamma)
&= \frac{\omega_d \pazocal{H}^d(\Omega)}{(2\pi)^d} \left[{\gamma^{-2}-1}\right]^{\frac{d}{2k}} \\
& \quad \ -\frac{\omega_{d-1} \pazocal{H}^{d-1}\left(\partial \Omega\right)}{4(2\pi)^{d-1}}
\left[{\gamma^{-2}-1}\right]^{\frac{d-1}{2k}}
+ o_{\gamma \to 0}\left(\gamma^{-\frac{d-1}{k}} \right)\\
&=
 \frac{\omega_d \pazocal{H}^d(\Omega)}{(2\pi)^d}\, \gamma^{-\frac{d}{k}} 
-\frac{\omega_{d-1} \pazocal{H}^{d-1}\left(\partial \Omega\right)}{4(2\pi)^{d-1}}\,
\gamma^{-\frac{d-1}{k}}
+ o_{\gamma \to 0}\left(\gamma^{-\frac{d-1}{k}} \right).
\end{align*}
It can now readily be verified that, under the choices
\begin{equation}\label{eq:thischoiceworkswell}
\kappa_1 = \frac{\omega_d \pazocal{H}^d(\Omega)}{(2\pi)^d}, \ \
\kappa_2 = -\frac{\omega_{d-1} \pazocal{H}^{d-1}\left(\partial \Omega\right)}{4(2\pi)^{d-1}}, \ \ 
\beta_1 = \frac{d}{k}, \quad \text{and} \quad 
\beta_2 = \frac{d-1}{k},
\end{equation}
we have $\beta_2<\beta_1$
and the assumption $d\geq 3$ implies $\beta_1<2\beta_2$.
We can hence apply Corollary \ref{cor: Metric entropy for polynomial ellipsoids second order ecf} with $\kappa_1$, $\kappa_2$, $\beta_1$, and $\beta_2$ according to (\ref{eq:thischoiceworkswell})
to obtain the second desired result 
\begin{equation*}
H \left(\varepsilon\right) 
= \frac{k \, \omega_d \,  \pazocal{H}^d(\Omega)}{d \, (2\pi)^d \, \ln(2)} \varepsilon^{-\frac{d}{k}}  
- \frac{k\, \omega_{d-1} \,  \pazocal{H}^{d-1}\left(\partial \Omega\right)}{4\,(d-1)\,(2\pi)^{d-1}  \, \ln(2)}
\varepsilon^{-\frac{d-1}{k}} 
+ o_{\varepsilon \to 0}\left(\varepsilon^{-\frac{d-1}{k}}\right)
\end{equation*}
and 
\begin{align*}
\varepsilon_m 
=& \left[\frac{k \, \omega_d \,  \pazocal{H}^d(\Omega)}{d \, (2\pi)^d \, \ln(2)} \right]^{\frac{k}{d}}{m^{-\frac{k}{d}}}\\
& \ - \frac{\pi \, k\, {\omega_{d-1} \pazocal{H}^{d-1}(\partial \Omega)}}{2 \, (d-1)\, \omega_d \, \pazocal{H}^d(\Omega) }\,\left(\frac{k\, \omega_d \pazocal{H}^d(\Omega)}{d\, (2\pi)^d \,  \ln(2)}\right)^{\frac{k+1}{d}} m^{-\frac{k+1}{d}}
+ o_{m\to \infty}\left({m^{-\frac{k+1}{d}}}\right),
\end{align*}
thereby concluding the proof.
\end{proof}

\bibliography{main}

\appendix

\section*{Appendix}\label{sec:app}

\begin{lemma}\label{lem:corpolydecay}
Let $\{\mu_n\}_{n \in \mathbb{N}^*}$ be a sequence of positive real numbers
such that 
\begin{equation*}
\mu_n 
= \frac{c_1}{n^{\alpha_1}} + \frac{c_2}{n^{\alpha_2}} + o_{n \to \infty} \left(\frac{1}{n^{\alpha_2}}\right),
\end{equation*}
with $c_1\in\mathbb{R}_+^*$, $c_2\in\mathbb{R}$, and $\alpha_1, \alpha_2\in \mathbb{R}_+^*$ satisfying 
\begin{equation*}
\text{either } \quad 
\alpha_1<\alpha_2 < \alpha_1+1/2,
\ \text{ or } \quad
\begin{cases}
\alpha_1 = \alpha_2, \text{ and } \\
c_2 = 0.
\end{cases} 
\end{equation*}
The metric entropy with respect to the $\|\cdot\|_2$-norm of the ellipsoid in $ \ell^2$ with semi-axes $\{\mu_n\}_{n \in \mathbb{N}^*}$ satisfies
\begin{equation*}
H \left(\varepsilon\right)
= \frac{\alpha_1{c_1}^{\frac{1}{\alpha_1}}}{\ln(2)} \, {\varepsilon}^{-\frac{1}{\alpha_1}} 
+ \frac{c_2\, {c_1}^{\frac{1-\alpha_2}{\alpha_1}}}{\ln(2)(\alpha_1-\alpha_2+1)}\, \varepsilon^{-\frac{\alpha_1-\alpha_2+1}{\alpha_1}} 
+ o_{\varepsilon \to 0}\left(\varepsilon^{-\frac{\alpha_1-\alpha_2+1}{\alpha_1}}\right).
\end{equation*}
\end{lemma}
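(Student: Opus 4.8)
The plan is to compare $H(\varepsilon)$ for the given ellipsoid $E=\{y\in\ell^2:\sum_n(y_n/\mu_n)^2\le 1\}$ with the volumetric quantity $\sum_n\log(\mu_n/\varepsilon)$ and to evaluate the latter to second order. For $\delta>0$ introduce the effective dimension $N(\delta)\coloneqq\#\{n\in\mathbb{N}^*:\mu_n\ge\delta\}$; inverting the relation $\mu_{N(\delta)}\approx\delta$ by means of Lemma~\ref{lem:inv} yields the two-term expansion $N(\delta)=c_1^{1/\alpha_1}\delta^{-1/\alpha_1}+\frac{c_2}{\alpha_1}c_1^{(1-\alpha_2)/\alpha_1}\delta^{(\alpha_2-\alpha_1-1)/\alpha_1}+o\big(\delta^{(\alpha_2-\alpha_1-1)/\alpha_1}\big)$, and under either branch of the hypothesis one checks $\alpha_2<\alpha_1+1$, so the second term here --- like the claimed second term of $H$ --- is of strictly smaller order than the first.

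The entropy is then sandwiched as follows. For the lower bound, truncate $E$ to the sub-ellipsoid spanned by its $N(\varepsilon)$ largest semi-axes; covering $E$ is at least as hard as covering the orthogonal projection onto those coordinates, and since every retained semi-axis is $\ge\varepsilon$, the volumetric bound (Lemma~\ref{lem:lbentropy}, in the form used in the proof of Theorem~\ref{thm: LPS and sampling}) gives $H(\varepsilon;E)\ge\frac{1}{\ln 2}\sum_{n=1}^{N(\varepsilon)}\ln(\mu_n/\varepsilon)$. For the upper bound, pick an index $N^+=N^+(\varepsilon)$ with $\mu_{N^++1}$ polynomially (in $\varepsilon^{-1}$) smaller than $\varepsilon$: since any $y\in E$ has $\ell^2$-tail of norm at most $\mu_{N^++1}$, padding with zeros the centres of any $(\varepsilon^2-\mu_{N^++1}^2)^{1/2}$-covering of the truncation $E_+$ to the $N^+$ largest semi-axes yields an $\varepsilon$-covering of $E$, so $H(\varepsilon;E)\le H(\tilde\varepsilon;E_+)$ with $\tilde\varepsilon\coloneqq(\varepsilon^2-\mu_{N^++1}^2)^{1/2}=\varepsilon(1+o(1))$; feeding $E_+$ into the sharp finite-dimensional ellipsoid entropy estimate of \cite{firstpaper} bounds $H(\tilde\varepsilon;E_+)$ by $\frac{1}{\ln 2}\sum_{n:\mu_n\ge\tilde\varepsilon}\ln(\mu_n/\tilde\varepsilon)$ plus an error term $\mathcal{R}(\varepsilon)$, and $N^+$ can be chosen so that passing from $\tilde\varepsilon$ to $\varepsilon$ in this sum costs only $o\big(\varepsilon^{(\alpha_2-\alpha_1-1)/\alpha_1}\big)$.

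Both bounds thus reduce to the asymptotics of $\sum_{n=1}^{N(\delta)}\ln(\mu_n/\delta)$. Expanding $\ln\mu_n=\ln c_1-\alpha_1\ln n+\frac{c_2}{c_1}n^{\alpha_1-\alpha_2}+o(n^{\alpha_1-\alpha_2})$ (valid because $\alpha_1<\alpha_2$), applying Stirling's formula to the resulting $\ln(N!)$, using $\sum_{n\le N}n^{\alpha_1-\alpha_2}\sim N^{\alpha_1-\alpha_2+1}/(\alpha_1-\alpha_2+1)$ (legitimate since $\alpha_2<\alpha_1+1$), and substituting the two-term expansion of $N(\delta)$, one finds that the leading term $\alpha_1 c_1^{1/\alpha_1}\delta^{-1/\alpha_1}$ emerges from the cancellation of $N\ln(c_1/\delta)$ against $\alpha_1 N\ln N$ (and is hence robust to lower-order changes of the truncation level), that the two $O(N^{\alpha_1-\alpha_2+1})$ contributions generated by the second-order part of $N(\delta)$ cancel one another, and that the second-order term is produced entirely by $\frac{c_2}{c_1}\sum_{n\le N}n^{\alpha_1-\alpha_2}$, evaluating --- after division by $\ln 2$ --- to precisely $\frac{c_2\,c_1^{(1-\alpha_2)/\alpha_1}}{(\alpha_1+1-\alpha_2)\ln 2}\,\delta^{(\alpha_2-\alpha_1-1)/\alpha_1}$; the remaining Stirling terms, of order $\log(\delta^{-1})$, are absorbed into $o\big(\delta^{(\alpha_2-\alpha_1-1)/\alpha_1}\big)$. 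Taking $\delta=\varepsilon$ in the lower bound and $\delta=\tilde\varepsilon$ in the upper bound then gives the claimed expansion, provided $\mathcal{R}(\varepsilon)=o\big(\varepsilon^{(\alpha_2-\alpha_1-1)/\alpha_1}\big)$.

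I expect this last point to be the main obstacle. Elementary ball-covering estimates carry an $O(\text{dimension})$ additive slack, which here is of order $\varepsilon^{-1/\alpha_1}$, i.e.\ as large as the leading term, and are therefore useless; one genuinely needs the precise finite-dimensional ellipsoid entropy result of \cite{firstpaper}, whose error degrades as the eccentricity $\mu_1/\mu_{N^+}$ of $E_+$ (which grows polynomially in $\varepsilon^{-1}$) increases. Requiring this error to be $o\big(\varepsilon^{(\alpha_2-\alpha_1-1)/\alpha_1}\big)$, so that the upper bound matches the lower bound to second order, is exactly what imposes the quantitative window $\alpha_1<\alpha_2<\alpha_1\big(1+\frac{2}{6\alpha_1+1}\big)$; the degenerate branch $\alpha_1=\alpha_2$, $c_2=0$ is immediate, there being no second term to resolve and the leading-order statement already following from the first-order theory in \cite{firstpaper,secondpaper}.
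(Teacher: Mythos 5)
Your outline correctly identifies the right proxy quantity and your second-order evaluation of it is sound: with $N(\delta)=\#\{n:\mu_n\ge\delta\}$ expanded via Lemma~\ref{lem:inv}, the sum $\frac{1}{\ln 2}\sum_{n\le N(\delta)}\ln(\mu_n/\delta)$ does produce exactly the two claimed terms (the first-order contributions of the correction to $N(\delta)$ cancel, and the second term comes from $\frac{c_2}{c_1}\sum_{n\le N}n^{\alpha_1-\alpha_2}$), so the lower-bound half can indeed be made rigorous from the volumetric bound of Lemma~\ref{lem:lbentropy} (in its real form, with factor $d$ rather than $2d$). The problem is the upper bound, and you say so yourself: everything hinges on an error term $\mathcal{R}(\varepsilon)$ that must be $o\bigl(\varepsilon^{(\alpha_2-\alpha_1-1)/\alpha_1}\bigr)$, and you neither exhibit a covering achieving this nor quote a precise statement that does. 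This is not a technicality to be deferred: the excess of any elementary covering scheme over the volume bound (coordinate-wise coverings, or Rogers-density arguments applied to the whole ellipsoid, where the relevant ratio $\mathrm{vol}(\pazocal{E}+\varepsilon\pazocal{B})/\mathrm{vol}(\pazocal{E})$ is exponential in the number of semi-axes of size comparable to $\varepsilon$) is of order $\varepsilon^{-1/\alpha_1}$, i.e.\ of the same order as the leading term, so such arguments cannot even recover the first-order constant, let alone the second term. Controlling this slack below the second-order scale, and showing that this is possible precisely when $\alpha_1<\alpha_2<\alpha_1\bigl(1+\frac{2}{6\alpha_1+1}\bigr)$, is the entire mathematical content of the lemma; your proposal asserts that the window condition ``is exactly what imposes'' this, but gives no argument. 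The same applies, in milder form, to the branch $\alpha_1=\alpha_2$, $c_2=0$: there the claim is the sharp one-term asymptotics with error $o(\varepsilon^{-1/\alpha_1})$, which is likewise not ``immediate'' from elementary coverings.

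For comparison, the paper does not reprove any of this: its proof of Lemma~\ref{lem:corpolydecay} consists of citing \cite[Theorem 11]{secondpaper} for the case $\alpha_1=\alpha_2$, $c_2=0$, and \cite[Theorem 12]{secondpaper} for the case $\alpha_1<\alpha_2<\alpha_1\bigl(1+\frac{2}{6\alpha_1+1}\bigr)$ --- these are precisely the sharp one- and two-term asymptotics for the entropy of $\ell^2$-ellipsoids with regularly varying semi-axes, where the upper-bound construction and the origin of the window condition live. So either invoke those results directly, in which case your intermediate reductions are unnecessary, or supply the sharp covering construction with the required error bound; as it stands, the decisive step of your argument is missing.
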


\begin{proof}
For $\alpha_1 = \alpha_2$ and $c_2 = 0$, the result is
by \cite[Corollary~16]{secondpaper}; for  $\alpha_1<\alpha_2 < \alpha_1+1/2$,
the statement is a consequence of \cite[Theorem~17]{secondpaper}.
\end{proof}

\begin{lemma}[Inversion lemma, first order]\label{lem:inv1}
For $\kappa_1\in\mathbb{R}_+^*$ and $\beta_1 \in \mathbb{R}^*_+$, let
$\{\zeta_n\}_{n\in\mathbb{N}^*}$ be a sequence of real numbers satisfying 
\begin{equation*}
  n 
  = \kappa_1\zeta_n^{-\beta_1} + o_{n \to \infty} \left(\zeta_n^{-\beta_1}\right).
\end{equation*}
Then, we have 
\begin{equation*}
  \zeta_n
  = \kappa_1^{1/\beta_1} n^{-1/\beta_1}
  + o_{n \to \infty} \left(n^{-1/\beta_1}\right).
\end{equation*}
\end{lemma}

\begin{proof}
  The proof follows directly by rearranging terms and using the Taylor series expansion of $(\kappa_1+x)^{1/\beta_1}$ around $x=0$.
\end{proof}

\begin{lemma}[Inversion lemma, second order]\label{lem:inv}
For $\kappa_1\in\mathbb{R}_+^*$, $\kappa_2\in\mathbb{R}$,  and $\beta_1, \beta_2 \in \mathbb{R}^*_+$ such that $\beta_1 > \beta_2$, let
$\{\zeta_n\}_{n\in\mathbb{N}^*}$ be a sequence of positive real numbers satisfying 
\begin{equation}\label{eq:yelphe}
n 
= \kappa_1\zeta_n^{-\beta_1} + \kappa_2 \, \zeta_n^{-\beta_2} + o_{n \to \infty} \left(\zeta_n^{-\beta_2}\right).
\end{equation}
Then, we have 
\begin{equation*}
\zeta_n
= \kappa_1^{1/\beta_1} n^{-1/\beta_1}+ \frac{\kappa_1^{1/\beta_1-\beta_2/\beta_1}\kappa_2}{\beta_1} \, n^{\beta_2/\beta_1-1/\beta_1-1} + o_{n \to \infty} \left(n^{\beta_2/\beta_1-1/\beta_1-1}\right).
\end{equation*}
\end{lemma}

\begin{proof}
We first note that owing to $\beta_1, \beta_2 \in \mathbb{R}^*_+$, (\ref{eq:yelphe}) implies $\lim_{n\rightarrow \infty} \zeta_{n} = 0$. As $\beta_1 > \beta_2$, it hence follows that
\begin{equation*}
n 
= \kappa_1\zeta_n^{-\beta_1} 
+ o_{n \to \infty} \left(\zeta_n^{-\beta_1}\right),
\end{equation*}
which, upon application of Lemma~\ref{lem:inv1}, yields
\begin{equation}\label{eq:lkjhgfdd3}
\zeta_n
= \kappa_1^{1/\beta_1} n^{-1/\beta_1}\left(1+o_{n\to\infty}(1)\right).
\end{equation}
Therefore, upon rewriting
\begin{align*}
n 
&= \kappa_1\zeta_n^{-\beta_1} + \kappa_2 \, \zeta_n^{-\beta_2} + o_{n \to \infty} \left(\zeta_n^{-\beta_2}\right) \\
&= \kappa_1\zeta_n^{-\beta_1}\left[1+ \kappa_1^{-1}\kappa_2 \, \zeta_n^{\beta_1-\beta_2} + o_{n \to \infty} \left(\zeta_n^{\beta_1-\beta_2}\right)\right],
\end{align*}
we obtain 
\begin{align*}
\zeta_n
&\hspace{5pt} = \kappa_1^{1/\beta_1} n^{-1/\beta_1}\left[1+ {\kappa_1^{-1}\kappa_2} \, \zeta_n^{\beta_1-\beta_2} + o_{n \to \infty} \left(\zeta_n^{\beta_1-\beta_2}\right)\right]^{1/\beta_1}\\
&\hspace{5pt} = \kappa_1^{1/\beta_1} n^{-1/\beta_1}\left[1+ \frac{\kappa_1^{-1}\kappa_2}{\beta_1} \, \zeta_n^{\beta_1-\beta_2} + o_{n \to \infty} \left(\zeta_n^{\beta_1-\beta_2}\right)\right]\\
&\stackrel{(\ref{eq:lkjhgfdd3})}{=} \kappa_1^{1/\beta_1} n^{-1/\beta_1}\left[1+ \frac{\kappa_1^{-\beta_2/\beta_1}\kappa_2}{\beta_1} \, n^{\beta_2/\beta_1-1} + o_{n \to \infty} \left(n^{\beta_2/\beta_1-1}\right)\right]\\
&\hspace{5pt}= \kappa_1^{1/\beta_1} n^{-1/\beta_1}+ \frac{\kappa_1^{1/\beta_1-\beta_2/\beta_1}\kappa_2}{\beta_1} \, n^{\beta_2/\beta_1-1/\beta_1-1} + o_{n \to \infty} \left(n^{\beta_2/\beta_1-1/\beta_1-1}\right),
\end{align*}
which finalizes the proof.
\end{proof}

\begin{lemma}\label{lem:jkdqvbftgzdezrfgezg}
  For $\alpha_1,c_1\in\mathbb{R}_+^*$, let $\{\lambda_n\}_{n\in\mathbb{N}^*}$ be a sequence of positive real numbers satisfying $\lambda_n \stackrel{n\to\infty}{\sim} c_1 n^{-\alpha_1}$.
  Then, we have
  \begin{equation*}
    \sup_{N \in \mathbb{N}^*} \left\{2^{-m/N} \left[\prod_{n=1}^N \lambda_n \right]^{1/N} \right\}
    = c_1  \left(\frac{\alpha_1}{\ln(2)}\right)^{\alpha_1}m^{-\alpha_1} + o_{m\to\infty}\left({m^{-\alpha_1}}\right).
  \end{equation*}
\end{lemma}

\begin{proof}
    Using $\lambda_n \stackrel{n\to\infty}{\sim} c_1 n^{-\alpha_1}$, we get
  \begin{align}
      \ln\left(\left[\prod_{n=1}^N \lambda_n \right]^{1/N} \right) 
      &= {\frac{1}{N}\sum_{n=1}^N\ln\left(c_1 n^{-\alpha_1}\left(1+o_{n\to\infty}(1)\right)\right)} \label{eq:hdfjkshjkbdjkbvjkbjsf7893}\\
      &= {\frac{1}{N}\sum_{n=1}^N\left[\ln\left(c_1 n^{-\alpha_1}\right)+o_{n\to\infty}(1)\right]} \nonumber\\ 
      &= {\frac{1}{N}\sum_{n=1}^N\left[\ln\left(c_1 n^{-\alpha_1}\right)\right]+o_{N\to\infty}(1)} \nonumber\\
      &= \ln\left(\left[\prod_{n=1}^N (c_1 n^{-\alpha_1})\right]^{1/N} \right)+o_{N\to\infty}(1),\label{eq:hdfjkshjkbdjkbvjkbjsf7894}
  \end{align}
    where, in the penultimate step, we applied \cite[Lemma~42]{secondpaper} with $a=0$.
    Taking exponents in (\ref{eq:hdfjkshjkbdjkbvjkbjsf7893})-(\ref{eq:hdfjkshjkbdjkbvjkbjsf7894}) and applying Stirling's formula, yields
  \begin{equation}\label{eq:lkljjllkzlkdpzddekdekcvkjfndkjvnlkfnsd}
      \left[\prod_{n=1}^N \lambda_n \right]^{1/N} 
      \stackrel{N\to\infty}{\sim} c_1 \left(\frac{N}{e}\right)^{-\alpha_1}.
  \end{equation}
  For fixed $m\in\mathbb{N}^*$, let the integer $N$ that maximizes $2^{-m/N} [\prod_{n=1}^N \lambda_n ]^{1/N}$ be denoted as $N^*_m\in\mathbb{N}^*$. 
  We remark that $N^*_m \xrightarrow{m\to\infty}\infty$ as otherwise
  $2^{-m/N^*_m} [\prod_{n=1}^{N^*_m} \lambda_n]^{1/N^*_m}$ would decay exponentially fast in $m$, 
  while, e.g., the choice $N=m$ delivers polynomial decay.
  It now follows from (\ref{eq:lkljjllkzlkdpzddekdekcvkjfndkjvnlkfnsd}) that
  \begin{equation}\label{eq:jkvsfkdgvbjkbsdkjkjjkrnfvskjvf1}
    2^{-m/N^*_m} \left[\prod_{n=1}^{N^*_m} \lambda_n \right]^{1/N^*_m}
    \stackrel{m\to\infty}{\sim} c_1 2^{-m/N^*_m} \left(\frac{N^*_m}{e}\right)^{-\alpha_1}.
  \end{equation}
  Analyzing the function 
  \begin{equation*}
    t \in \mathbb{R}_+^*
    \longmapsto \frac{m}{t}+\alpha_1\log(t),
  \end{equation*}
  shows that $N^*_m \stackrel{m\to\infty}{\sim} \ln(2)\, m / \alpha_1$, which, in turn, yields
  \begin{equation}\label{eq:jkvsfkdgvbjkbsdkjkjjkrnfvskjvf2}
    c_1 2^{-m/N^*_m} \left(\frac{N^*_m}{e}\right)^{-\alpha_1}
    \stackrel{m\to\infty}{\sim} c_1  \left(\frac{\alpha_1}{\ln(2)}\right)^{\alpha_1}m^{-\alpha_1}.
  \end{equation}
  Combining (\ref{eq:jkvsfkdgvbjkbsdkjkjjkrnfvskjvf1}) and (\ref{eq:jkvsfkdgvbjkbsdkjkjjkrnfvskjvf2}), finalizes the proof.
\end{proof}

\begin{lemma}\label{lem:lbentropy}
For $d\in \mathbb{N}^*$, let
 $\pazocal{E}^d$ be the $\|\cdot\|_2$-ellipsoid in $\mathbb{C}^d$ with positive semi-axes $\mu_1, \dots, \mu_d$.
 The metric entropy of $\pazocal{E}^d$ with respect to the $\|\cdot\|_2$-norm satisfies
\begin{equation*}
H \left(\varepsilon ; \pazocal{E}^d, \|\cdot\|_2 \right)
\geq 2  d \left[\log\left(\varepsilon^{-1}\right)  + \frac{1}{d}\sum_{n=1}^d \log \left(\mu_n\right) \right], \quad \mbox{for all}\,\, \varepsilon>0. 
\end{equation*}
\end{lemma}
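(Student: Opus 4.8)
The plan is to establish this lower bound by a standard volume comparison argument. First I would identify $\mathbb{C}^d$, equipped with the $\|\cdot\|_2$ norm, with Euclidean space $\mathbb{R}^{2d}$ via the isometry sending a vector to the tuple of its real and imaginary parts. Under this identification the ellipsoid $\pazocal{E}^d$ with complex semi-axes $\mu_1, \dots, \mu_d$ becomes the real ellipsoid in $\mathbb{R}^{2d}$ whose semi-axes are $\mu_1, \mu_1, \mu_2, \mu_2, \dots, \mu_d, \mu_d$, i.e. each $\mu_n$ repeated twice; indeed, $z = (z_1, \dots, z_d)$ lies in $\pazocal{E}^d$ iff $\sum_n |z_n/\mu_n|^2 \le 1$, which in real coordinates reads $\sum_n (\mathrm{Re}(z_n)^2 + \mathrm{Im}(z_n)^2)/\mu_n^2 \le 1$. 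Consequently, applying the change-of-variables formula to the linear map taking the unit ball of $\mathbb{R}^{2d}$ onto $\pazocal{E}^d$, its $2d$-dimensional Lebesgue (equivalently, Hausdorff) measure is $\pazocal{H}^{2d}(\pazocal{E}^d) = \omega_{2d} \prod_{n=1}^d \mu_n^2$.

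Next, I would let $\{x_1, \dots, x_N\}$ be an arbitrary $\varepsilon$-covering of $\pazocal{E}^d$ of minimal cardinality $N = N(\varepsilon; \pazocal{E}^d, \|\cdot\|_2)$, so that $\pazocal{E}^d \subseteq \bigcup_{i=1}^N \overline{B}(x_i, \varepsilon)$, where $\overline{B}(x_i, \varepsilon)$ is the closed Euclidean ball of radius $\varepsilon$ in $\mathbb{R}^{2d}$. Monotonicity and subadditivity of Lebesgue measure then give
\[
\omega_{2d} \prod_{n=1}^d \mu_n^2
= \pazocal{H}^{2d}\bigl(\pazocal{E}^d\bigr)
\leq \sum_{i=1}^N \pazocal{H}^{2d}\bigl(\overline{B}(x_i, \varepsilon)\bigr)
= N\, \omega_{2d}\, \varepsilon^{2d},
\]
whence $N \geq \varepsilon^{-2d} \prod_{n=1}^d \mu_n^2$.

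Finally I would take the base-$2$ logarithm to obtain
\[
H\bigl(\varepsilon; \pazocal{E}^d, \|\cdot\|_2\bigr)
= \log N
\geq 2d\log\bigl(\varepsilon^{-1}\bigr) + 2\sum_{n=1}^d \log(\mu_n)
= 2d\left[\log\bigl(\varepsilon^{-1}\bigr) + \frac{1}{d}\sum_{n=1}^d \log(\mu_n)\right],
\]
valid for every $\varepsilon > 0$, which is the asserted inequality. There is no genuine obstacle in this argument; the only points demanding a modicum of care are correctly bookkeeping the doubling of dimension when passing from $\mathbb{C}^d$ to $\mathbb{R}^{2d}$ — this is exactly what produces the factor $2d$ rather than $d$ — and noting that the covering centers $x_i$ need not belong to $\pazocal{E}^d$, which is harmless since the volume estimate only uses that the balls cover $\pazocal{E}^d$.
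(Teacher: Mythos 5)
Your proof is correct: the identification of $\pazocal{E}^d$ with the real ellipsoid in $\mathbb{R}^{2d}$ having doubled semi-axes $\mu_1,\mu_1,\dots,\mu_d,\mu_d$, the volume computation $\pazocal{H}^{2d}(\pazocal{E}^d)=\omega_{2d}\prod_{n=1}^d\mu_n^2$, and the comparison with the total volume $N\,\omega_{2d}\,\varepsilon^{2d}$ of the covering balls together give exactly the claimed bound after taking logarithms, and you correctly note that centers outside the ellipsoid cause no trouble. The paper, however, does not argue this way: it simply invokes \cite[Theorem 4]{firstpaper} with $p=q=2$, $\mathbb{K}=\mathbb{C}$, and the constant $\underline{\kappa}$ set to $1$, i.e., it imports a general lower bound on covering numbers of ellipsoids in $\ell^p$-type geometries proved in the authors' earlier work (whose proof is itself of volumetric flavor, but formulated for general exponents and with an explicit constant). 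Your route buys a short, completely self-contained and elementary argument requiring nothing beyond the change-of-variables formula and subadditivity of Lebesgue measure, which is arguably preferable for a reader who does not have \cite{firstpaper} at hand; the paper's route buys brevity and consistency with its other appendix lemmas, and the cited theorem covers more general norm pairs than the Euclidean case needed here. Both yield the identical inequality for all $\varepsilon>0$, so there is no gap in your proposal.
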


\begin{proof}
The proof follows directly by application of \cite[Theorem~4]{firstpaper} with $p=q=2$, $\mathbb{K}=\mathbb{C}$, and where we set $\underline{\kappa}=1$, which is possible by the remark at the end of the proof of \cite[Theorem~4]{firstpaper}.
\end{proof}

\begin{lemma}\label{lem:densityarg}
Let $\pazocal{B}_2$ be the unit ball in $\mathbb{C}^d$ with respect to the $\|\cdot\|_2$-norm.
Then, we have 
\begin{equation*}
N\left(\varepsilon; \pazocal{B}_2, \|\cdot\|_2 \right)^{1/(2d)} \, \varepsilon
\leq \kappa(d),
\quad \text{for all } d \in \mathbb{N}^* \text{ and } \varepsilon \in (0,1),
\end{equation*}
with the sequence $\kappa(d), d \in \mathbb{N}^*$, satisfying 
$\lim_{d\to\infty}\kappa(d) = 1$.
\end{lemma}

\begin{proof}
We identify $\mathbb{C}^d$ with $\mathbb{R}^{2d}$.
For $d=1, \dots, 4$, we simply set 
$$
\kappa(d) = \sup_{\varepsilon \in (0,1)} N(\varepsilon; \pazocal{B}_2, \|\cdot\|_2 )^{1/(2d)} \, \varepsilon.
$$
This supremum is finite as a consequence of \cite[Lemma~3]{firstpaper} with $\mathbb{K}=\mathbb{C}$, $\|\cdot\|=\|\cdot\|'=\|\cdot\|_2$, and $\mathcal{B}=\mathcal{B}'=\mathcal{B}_2$, which states
\begin{equation*}
    N(\varepsilon; \pazocal{B}_2, \|\cdot\|_2 )^{1/(2d)} \, \varepsilon
    \leq 2 \left(1+\varepsilon/2\right),
    \quad \text{for all } \varepsilon\in (0,1).
\end{equation*}
For $d\geq 5$, the statement follows by \cite[Theorem~3]{rogers1963covering} with $\varepsilon = 1/R$ and upon setting $\kappa(d) = c(d^{5/2})^{1/(2d)}$, for some constant $c>0$.
\end{proof}

\end{document}